\documentclass{amsart}
\usepackage{amscd,amssymb,amsxtra}

\theoremstyle{plain}
\newtheorem{Thm}{Theorem}[section]

\newtheorem{Lem}[Thm]{Lemma}
\newtheorem{Cor}[Thm]{Corollary}
\newtheorem{Claim}[Thm]{Claim}
\newtheorem*{PET}{The Pointwise Ergodic Theorem}

\theoremstyle{definition}
\newtheorem{Def}[Thm]{Definition}

\newtheorem{Remark}[Thm]{Remark}
\newtheorem{Ex}[Thm]{Example}

\errorcontextlines=0
\numberwithin{equation}{section}

\newcommand{\Alt}{\operatorname{Alt}}

\newcommand{\Fix}{\operatorname{Fix}}

\newcommand{\sgn}{\operatorname{sgn}}

\newcommand{\supp}{\operatorname{supp}}
\newcommand{\Irr}{\operatorname{Irr}}
\newcommand{\mn}{\text{min}}

\newcommand{\Sub}{\operatorname{Sub}}
\newcommand{\Sym}{\operatorname{Sym}}
\newcommand{\Fin}{\operatorname{Fin}}
\newcommand{\mx}{\text{max}}
\newcommand{\ER}{\operatorname{ER}}
\newcommand{\can}{\text{can}}
\newcommand{\Diag}{\operatorname{Diag}}

\begin{document}


\begin{abstract}
We will describe the relationship 
between the indecomposable characters of $\Fin(\mathbb{N})$ 
and its ergodic invariant random subgroups; and we
will interpret each Thoma character
$\chi_{(\beta; \gamma)}$
as an asymptotic limit of a naturally associated sequence 
of characters induced from linear characters of Young
subgroups of finite symmetric groups.
\end{abstract}

\title[Characters and invariant random subgroups]
{Characters and invariant random subgroups of the 
finitary symmetric group} 

\author{Simon Thomas}
\address
{Mathematics Department \\
Rutgers University \\
110 Frelinghuysen Road \\
Piscataway \\
New Jersey 08854-8019 \\
USA}
\email{simon.rhys.thomas@gmail.com}


\maketitle

\section{Introduction} \label{S:intro}

If $\Omega$ is an infinite set, then the corresponding
{\em finitary symmetric group\/} $\Fin(\Omega)$ is
the group of permutations $g \in \Sym(\Omega)$
such that 
$\supp(g) = 
\{\, \omega \in \Omega \mid g(\omega) \neq \omega \,\}$
is finite. In his classic paper \cite{t}, Thoma
classified the indecomposable characters of 
$\Fin(\mathbb{N})$. More recently, Vershik \cite{v2}
classified the ergodic invariant random subgroups
of $\Fin(\mathbb{N})$; and he 
pointed out that the indecomposable characters of $\Fin(\mathbb{N})$ are very closely connected with its ergodic invariant random subgroups. In this paper, we will 
describe the precise relationship between the indecomposable characters of $\Fin(\mathbb{N})$ and its ergodic invariant random subgroups. 
Before we stating our main result, we will recall
Thoma's classification of the the indecomposable characters 
of $\Fin(\mathbb{N})$ and Vershik's 
classification\footnote{We will take this opportunity to correct an inaccuracy in the statement \cite{v2} of Vershik's classification theorem.}  of the ergodic invariant subgroups of $\Fin(\mathbb{N})$.
Throughout this paper, $D[\,0,1\,]$ will denote the set of sequences
\[
\alpha = (\, \alpha_{1}, \alpha_{2}, \cdots, 
\alpha_{n}, \cdots\,) \in [\,0,1\,]^{\mathbb{N}^{+}}
\] 
such that $\alpha_{1} \geq \alpha_{2} \geq \cdots \geq 
\alpha_{n} \geq \cdots$ 

First recall that if $G$ is a countable group, then a function $\chi: G \to \mathbb{C}$ is said to be a
{\em character\/} if the following conditions are satisfied:
\begin{enumerate}
\item[(i)] $\chi(h\,g\,h^{-1}) = \chi(g)$ for all $g$, $ \in G$.
\item[(ii)] $\sum_{i,j=1}^{n} \lambda_{i} \Bar{\lambda}_{j} \chi(g_{j}^{-1}g_{i}) \geq 0$
for all $\lambda_{1}, \cdots, \lambda_{n} \in \mathbb{C}$ and $g_{1}, \cdots, g_{n} \in G$.
\item[(iii)] $\chi(1_{G}) = 1$.
\end{enumerate}
A character $\chi$ is said to be {\em indecomposable\/} or 
{\em extremal\/} if it is impossible to express $\chi = r \chi_{1} + (1 - r) \chi_{2}$, where 
$0 < r < 1$ and $\chi_{1} \neq \chi_{2}$ are distinct characters. By Thoma \cite{t}, the indecomposable 
characters of $\Fin(\mathbb{N})$ are precisely the functions
\[
\chi_{(\beta; \gamma)}(g) = 
\prod_{k=2}^{\infty} 
(\,\sum_{i=1}^{\infty} \beta_{i}^{k} + 
(-1)^{k+1} \sum_{i=1}^{\infty} \gamma_{i}^{k}\,)
^{c_{k}(g)},
\] 
where $\beta = (\, \beta_{i} \,)_{i \in \mathbb{N}^{+}}$,
$\gamma = (\, \gamma_{i} \,)_{i \in \mathbb{N}^{+}}  
\in D[\,0,1\,]$ are such that $\sum_{i=1}^{\infty} \beta_{i} + \sum_{i=1}^{\infty} \gamma_{i} \leq 1$, and
$c_{k}(g)$ is the number of cycles of length $k$ in the cyclic decomposition of the permutation $g$.

Next suppose that $G$ is a countably infinite group and let $\Sub_{G}$ be the compact space
of subgroups $H \leqslant G$. Then a Borel probability measure $\nu$ on $\Sub_{G}$ which is invariant 
under the conjugation action of $G$ on $\Sub_{G}$ is called an {\em invariant random subgroup\/} or $IRS$.
For example, suppose that $G$ acts via measure-preserving maps on the 
Borel probability space $(\, Z, \mu \,)$ and let 
$f: Z \to \Sub_{G}$ be the
$G$-equivariant map defined by
\[
z \mapsto G_{z} = \{\, g \in G \mid g \cdot z = z \,\}.
\]
Then the corresponding {\em stabilizer distribution\/} $\nu = f_{*}\mu$ is an IRS 
of $G$. In fact, by a result of Ab\'{e}rt-Glasner-Virag \cite{agv}, every IRS of $G$ can be realized as the 
stabilizer distribution of a suitably chosen measure-preserving action. Moreover, 
by Creutz-Peterson \cite{cp}, if $\nu$ is an ergodic IRS of $G$, then $\nu$ is the stabilizer distribution 
of an ergodic action $G \curvearrowright (\, Z, \mu\,)$. 
If $\nu$ is an IRS of $G$, then we can define a corresponding
character $\chi_{\nu}$ by
\[
\chi_{\nu}(g) = \nu(\, \{\, H \in \Sub_{G} \mid g \in H \,\}\,).
\]
Equivalently, 
$\chi_{\nu}(g) = \mu(\, \Fix_{Z}(g)\,)$, where 
$G \curvearrowright (\, Z, \mu\,)$ is any measure-preserving action with stabilizer distribution $\nu$.  

In order to describe the ergodic IRSs of $\Fin(\mathbb{N})$, let $\alpha = (\, \alpha_{i}\,)_{i \in \mathbb{N}^{+}} 
\in D[\,0,1\,]$ be such that 
$\sum_{i=1}^{\infty} \alpha_{i} \leq 1$ and
let $\alpha_{0} = 1 - \sum_{i=1}^{\infty} \alpha_{i}$. 
Then we can define a probability measure $p_{\alpha}$ on $\mathbb{N}$ by $p_{\alpha}(\,\{\,i\,\}\,) = \alpha_{i}$. 
Let $\mu_{\alpha}$ be the corresponding product
probability measure on $\mathbb{N}^{\mathbb{N}}$. Then $\Fin(\mathbb{N})$ acts ergodically 
on $(\, \mathbb{N}^{\mathbb{N}}, \mu_{\alpha}\,)$ via the shift action
$(\, g \cdot \xi\,)(n) = \xi(\, g^{-1}(n)\,)$. 
For each $\xi \in \mathbb{N}^{\mathbb{N}}$ and $i \in \mathbb{N}$, let
$B^{\xi}_{i} = \{\, n \in \mathbb{N} \mid \xi(n) = i \,\}$. Then for
$\mu_{\alpha} \text{-a.e. } \xi \in \mathbb{N}^{\mathbb{N}}$, the following statements
are equivalent for all $i \in \mathbb{N}$.
\begin{enumerate}
\item[(a)] $\alpha_{i} > 0$.
\item[(b)] $B^{\xi}_{i} \neq \emptyset$.
\item[(c)] $B^{\xi}_{i}$ is infinite.
\item[(d)] $\lim_{n \to \infty} |B^{\xi}_{i} \cap \{\, 0,1, \cdots , n-1 \,\}|/n = \alpha_{i} > 0$.
\end{enumerate} 
In this case, we will say that $\xi$ is 
{\em $\mu_{\alpha}$-generic\/}.
First suppose that $\alpha_{0} \neq 1$, so that 
$I = \{\, i \in \mathbb{N}^{+} \mid \alpha_{i} > 0 \,\} \neq \emptyset$. Let 
$S_{\alpha} = \bigoplus_{i \in I} C_{i}$ be the restricted
direct product of the cyclic groups
$C_{i} = \{\, \pm 1 \,\}$ of order 2. ({\em Warning:\/} throughout this paper, we will regard $S_{\alpha}$ as a 
multiplicative group.)
Then for each subgroup $A \leqslant S_{\alpha}$, we can define a corresponding $\Fin(\mathbb{N})$-equivariant 
Borel map
\begin{align*}
f^{A}_{\alpha} : \mathbb{N}^{\mathbb{N}} &\to \Sub_{\Fin(\mathbb{N})} \\
                                     \xi &\mapsto H_{\xi}
\end{align*} 
as follows. If $\xi$ is $\mu_{\alpha}$-generic, then $H_{\xi} = s_{\xi}^{-1}(A)$,
where $s_{\xi}$ is the homomorphism
\begin{align*}
s_{\xi}: \bigoplus_{i \in I} \Fin( B^{\xi}_{i}) &\to \bigoplus_{i \in I} C_{i} \\
                                (\, \pi_{i} \,) &\mapsto (\, \sgn(\pi_{i}) \,).
\end{align*}
Otherwise, if $\xi$ is not $\mu_{\alpha}$-generic, then we let $H_{\xi} = 1$. Let
$\nu^{A}_{\alpha} = (f^{A}_{\alpha})_{*}\mu_{\alpha}$ be the corresponding ergodic
IRS of $\Fin(\mathbb{N})$.
Finally, if $\alpha_{0} = 1$, then we define $S_{\alpha} = 1$ and $\nu_{\alpha}^{E_{\alpha}} = \delta_{1}$.

\begin{Thm} \label{T:irs}
If $\nu$ is an ergodic IRS of $\Fin(\mathbb{N})$, then there
exists $\alpha$, $A$ as above such that 
$\nu = \nu^{A}_{\alpha}$. 
\end{Thm}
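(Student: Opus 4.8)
The plan is to realise $\nu$ by an action, prove a structure theorem for a generic stabiliser, and then read off the parameters $\alpha$ and $A$ from the orbit partition of that stabiliser and from its ``sign data''. First I would use Ab\'{e}rt--Glasner--Virag \cite{agv} together with Creutz--Peterson \cite{cp} to fix an \emph{ergodic} measure-preserving action $\Fin(\mathbb{N})\curvearrowright(Z,\mu)$ whose stabiliser distribution is $\nu$, so that $z\mapsto H_z:=\{g:g\cdot z=z\}$ is a $\Fin(\mathbb{N})$-equivariant Borel map into $\Sub_{\Fin(\mathbb{N})}$ with $(H_\bullet)_\ast\mu=\nu$. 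Because the action is ergodic, every conjugation-invariant Borel property of $H_z$ holds either for $\mu$-a.e.\ $z$ or for no $z$, and every $\Fin(\mathbb{N})$-equivariant Borel quantity attached to the pair $(\mathbb{N},H_z)$ pushes $\mu$ forward to an ergodic invariant measure on the relevant space. These two principles drive everything below.

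The first, easier, half identifies $\alpha$. The assignment sending $z$ to the partition of $\mathbb{N}$ into $H_z$-orbits is $\Fin(\mathbb{N})$-equivariant, and a $\Fin(\mathbb{N})$-invariant Borel probability measure on the space of partitions of $\mathbb{N}$ is automatically invariant under all of $\Sym(\mathbb{N})$ (a finitary permutation already realises every finite partial bijection), so this pushforward is an ergodic exchangeable random partition of $\mathbb{N}$. By Kingman's paintbox theorem such a partition is the paintbox associated with a unique $\alpha=(\alpha_i)_{i\ge 1}$ satisfying $\alpha_1\ge\alpha_2\ge\cdots\ge 0$ and $\sum_i\alpha_i\le 1$: its non-singleton blocks are infinite, they are the colour classes and have asymptotic densities $\alpha_i$, while the set $F$ of $H_z$-fixed points is the dust, of density $\alpha_0=1-\sum_i\alpha_i$. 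In particular $H_z$ has no finite orbit of size $\ge 2$, and for $\mu$-a.e.\ $z$ one may transport the $H_z$-orbit partition onto the partition $\{B^{\xi}_{i}:i\in I\}\cup\{B^{\xi}_{0}\}$ of a $\mu_\alpha$-generic $\xi$; it then remains to locate $H_z$ inside the group of permutations respecting this partition.

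The second, harder, half is the structure theorem: for $\mu$-a.e.\ $z$ one has $\bigoplus_i\Alt(O_i)\leqslant H_z\leqslant\bigoplus_i\Fin(O_i)$, where the $O_i$ are the infinite $H_z$-orbits. The upper inclusion only uses that $H_z$ fixes $F$ pointwise, preserves each $O_i$ setwise, and is finitary. For the lower inclusion I would first prove that $H_z$ acts \emph{primitively} on each $O_i$: a finitary permutation fixes setwise every infinite block of a partition, so an infinite orbit admits no invariant partition with infinite blocks, while an invariant partition into finite blocks is excluded by a counting argument in the spirit of the following --- for distinct $(k-1)$-element sets $S,S'\subseteq\{1,2,\dots\}$, the events that $\{0\}\cup S$, resp.\ $\{0\}\cup S'$, is a block of a suitably canonicalised $H_z$-invariant partition into blocks of size $k$ are pairwise incompatible and $\mu$-equivalent (conjugate by a finitary permutation carrying $S$ to $S'$), so each has $\mu$-measure $0$. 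Granted primitivity, the classification of the infinite primitive finitary permutation groups forces $H_z|_{O_i}$ to be $\Alt(O_i)$ or $\Fin(O_i)$; since $H_z\cap\Fin(O_i)$ is normal in $H_z|_{O_i}$ and $\Alt(O_i)$ is simple, and after excluding the degenerate case $H_z\cap\Fin(O_i)=1$ by a further invariance argument, one gets $\Alt(O_i)\leqslant H_z$ and hence the displayed sandwich. Consequently $H_z$ is determined by its orbit partition together with the subgroup $A_z\leqslant\bigoplus_i C_i$ that is the image of $H_z\leqslant\bigoplus_i\Fin(O_i)$ under the sign homomorphisms $\Fin(O_i)\to\Fin(O_i)/\Alt(O_i)=C_i$ --- precisely the data defining $f^{A}_{\alpha}$. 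Since the orbit partition has already been pinned down by Kingman's theorem, the class of $A_z$ modulo the relabelling of equal-density blocks is a $\Fin(\mathbb{N})$-invariant quantity (signs of permutations are conjugation-invariant), hence $\mu$-a.e.\ constant; choosing a representative $A\leqslant S_\alpha$ and comparing with the construction yields $\nu=\nu^{A}_{\alpha}$. The possibility $H_z=1$ a.e.\ is the case $\alpha_0=1$, and the number of orbits being finite or infinite needs no separate treatment.

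The main obstacle is this structure theorem, i.e.\ showing that $\mu$-a.e.\ $H_z$ lies between $\bigoplus_i\Alt(O_i)$ and $\bigoplus_i\Fin(O_i)$. It rests on the theory of finitary permutation groups in two ways: the reduction to primitive orbits needs the incompatibility/invariance bookkeeping above, whose delicate point is that invariant partitions of an orbit are not canonical, so the counting must be arranged with care (for instance by replacing an arbitrary invariant partition with a canonical finite-block refinement and invoking Kingman's theorem again to contradict its existence); and the fact that an infinite primitive finitary permutation group is alternating or full symmetric is a genuine structural input. Once that sandwich is available, the remaining steps --- the appeal to Kingman's theorem and the ergodicity argument fixing $A$ --- are routine.
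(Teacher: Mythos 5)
Your overall strategy is the same as the paper's (Kingman's theorem applied to the orbit partition, Wielandt's classification of primitive finitary groups, invariance arguments to kill finite blocks, then the sign data and ergodicity to pin down $A$), but two of the steps you wave through are exactly where the real work lies, and one of them is a genuine gap. The serious one is your ``degenerate case''. Having shown that $H_z$ induces at least $\Alt(O_i)$ on each infinite orbit $O_i$, you correctly reduce to the dichotomy ``either $\Alt(O_i)\leqslant H_z$ or $H_z\cap\Fin(O_i)=1$'', but you dismiss the second alternative with ``a further invariance argument'' and give no indication of what that argument is. This case is not a formality: a diagonal copy of $\Alt(\Omega)$ inside $\Alt(\Omega_1)\oplus\Alt(\Omega_2)$ satisfies every constraint you have established up to that point (infinite orbits, full alternating image on each orbit, containment in $\bigoplus_i\Fin(O_i)$), yet contains no $\Alt(O_i)$. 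Excluding it requires a concrete equivariant object to push forward: the paper (Lemma \ref{L:alt2}, Claim \ref{C:product}) shows that $H\cap\bigoplus_i\Alt(\Omega_i)$ is a product of diagonal subgroups over a partition of the orbit set into finite blocks, uses the fact that automorphisms of $\Alt$ come from $\Sym$ to extract \emph{unique} bijections $T_{k,\ell}$ between linked orbits, and then feeds the graphs of these bijections (an invariant random equivalence relation with finite classes of size $\geq 2$) into Lemma \ref{L:finite} to force every block to be a singleton. Without some such construction your sandwich $\bigoplus_i\Alt(O_i)\leqslant H_z\leqslant\bigoplus_i\Fin(O_i)$ is unproved, and everything after it collapses.

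Two smaller points. First, for the imprimitivity step you rightly note that an invariant partition of an orbit is not canonical, but your proposed fix (``a canonical finite-block refinement'') does not exist in general: there may be several minimal nontrivial invariant equivalence relations, and their common refinement can be trivial, while the join of all proper ones can be improper. The paper's Lemma \ref{L:choice} is precisely the needed repair: either there is a unique maximal proper invariant relation (uniqueness proved via $2$-transitivity of the induced primitive action), or one takes the minimal invariant relation containing the finitely many minimal nontrivial ones (Claim \ref{C:min}). Second, your final sentence ``choosing a representative $A$ and comparing with the construction yields $\nu=\nu^{A}_{\alpha}$'' glosses over the fact that when some densities coincide there is no $\Fin(\mathbb{N})$-equivariant section $H\mapsto\xi_H$ (Remark \ref{R:eqn}), so you cannot simply transport $\nu$ back to $\mu_{\alpha}$; the paper instead shows that $\nu$ and $\nu^{A}_{\alpha}$ both concentrate on the set $X^{A}_{\alpha}$ of generic subgroups and proves unique ergodicity of $\Fin(\mathbb{N})\curvearrowright X^{A}_{\alpha}$ via the Pointwise Ergodic Theorem. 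Your argument does go through as sketched when all $\alpha_i$ are distinct, but the general case needs this extra step.
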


\begin{Remark} \label{R:irs}
There exist examples of sequences $\alpha$ and distinct subgroups $A$, $A^{\prime} \leqslant S_{\alpha}$
such that $\nu^{A}_{\alpha} = \nu^{A^{\prime}}_{\alpha}$.
For example, suppose that $\alpha_{1} = \alpha_{2} = 1/2$.
Then if $A_{1} = C_{1} \oplus 1$ and 
$A_{2} = 1 \oplus C_{2}$, then 
$\nu^{A_{1}}_{\alpha} = \nu^{A_{2}}_{\alpha}$.  
However, since for 
$\mu_{\alpha} \text{-a.e. } \xi \in \mathbb{N}^{\mathbb{N}}$, 
\[
\lim_{n \to \infty} |B^{\xi}_{i} \cap \{\, 0,1, \cdots , n-1 \,\}|/n = \alpha_{i}, 
\]
it follows that if
$\alpha \neq \alpha^{\prime}$ and $A$, $A^{\prime}$ are subgroups of $S_{\alpha}$,
$S_{\alpha^{\prime}}$, then $\nu^{A}_{\alpha} \neq \nu^{A^{\prime}}_{\alpha^{\prime}}$. 
\end{Remark}

\begin{Remark} \label{R:irs2}
Suppose that 
$\alpha = (\, \alpha_{i}\,)_{i \in \mathbb{N}^{+}}$ 
is such that there exist $i \in \mathbb{N}^{+}$ 
with $\alpha_{i} = \alpha_{i+1} > 0$. Let 
$A \leqslant S_{\alpha}$ be any subgroup and let
$\mathcal{S}^{A}_{\alpha}$ be the set of subgroups
$H \in \Sub_{\Fin(\mathbb{N}}$ such that there
exists a $\mu_{\alpha}$-generic 
$\xi \in \mathbb{N}^{\mathbb{N}}$ with
$\xi \overset{f^{A}_{\alpha}}{\mapsto} H_{\xi} = H$.
Then the map $\xi \mapsto H_{\xi} \in \mathcal{S}^{A}_{\alpha}$
is not injective; and it is easily seen that
there does not exist a $\Fin(\mathbb{N})$-equivariant
Borel map $H \mapsto \xi_{H}$ from $\mathcal{S}^{A}_{\alpha}$ 
to $\mathbb{N}^{\mathbb{N}}$ such that $H = H_{\xi_{H}}$. 
\end{Remark}

The relationship between the indecomposable characters
of $\Fin(\mathbb{N})$ and its ergodic IRSs is most
obvious when $A = S_{\alpha}$: in this case,
it is easily checked that 
\[
\chi_{\nu^{S_{\alpha}}_{\alpha}}(g) = 
\prod_{k=2}^{\infty} 
(\,\sum_{i=1}^{\infty} \alpha_{i}^{k} \,)^{c_{k}(g)}
= \chi_{(\alpha;\Bar{0})}(g),
\]
where $\Bar{0} \in D[\,0,1\,]$ denotes the
identically zero sequence. In particular, it follows
that $\chi_{\nu^{S_{\alpha}}_{\alpha}}$ is indecomposable.
Conversely, the somewhat ad hoc proof of 
Thomas-Tucker-Drob \cite[Theorem 9.2]{tt} shows
that if $A \neq S_{\alpha}$, then 
$\chi_{\nu^{A}_{\alpha}}$ is decomposable. (We
will give a more informative proof below.)
Of course, this result implies that if 
$\chi_{(\beta; \gamma)}$ is an indecomposable character
with $\gamma \neq \Bar{0}$, then there does not
exist an ergodic IRS $\nu$ of $\Fin(\mathbb{N})$
such that $\chi_{\nu} = \chi_{(\beta; \gamma)}$.  

In order to understand how arbitrary indecomposable
characters $\chi_{(\beta; \gamma)}$ are related to
the ergodic IRSs of $\Fin(\mathbb{N})$,
let $\widehat{S}_{\alpha}$ be the compact group of 
homomorphisms $\sigma: S_{\alpha} \to \{\, \pm 1\,\}$.
For each $i \in I$, let $c_{i}$ be the generator of
$C_{i} \leqslant S_{\alpha}$; and for each homomorphism
$\sigma \in \widehat{S}_{\alpha}$, let
$\sigma(i) = \sigma(c_{i})$. Then for each
$\sigma \in \widehat{S}_{\alpha}$, we can define
an indecomposable character of $\Fin(\mathbb{N})$ by
\addtocounter{equation}{3}
\begin{equation} \label{E:magic}
\chi^{\sigma}_{\alpha}(g) = 
\prod_{k=2}^{\infty} 
(\,\sum_{i \in I} \sigma(i)^{k+1} \alpha_{i}^{k} \,)^{c_{k}(g)}.
\end{equation}

\addtocounter{Thm}{1}
\begin{Remark} \label{R:thoma}
Suppose that $\sigma \in \widehat{S}_{\alpha}$. Let
$\beta = (\, \beta_{j} \,)_{j \in \mathbb{N}^{+}}
\in D[\,0,1\,]$ be the list 
(possibly augmented by a sequence of zeros) in decreasing
magnitude of the $\alpha_{i}$, $i \in I$, such that 
$\sigma(i) = 1$; and let 
$\gamma = (\, \gamma_{j} \,)_{j \in \mathbb{N}^{+}}
\in D[\,0,1\,]$ be the list 
(possibly augmented by a sequence of zeros) in decreasing
magnitude of the $\alpha_{i}$, $i \in I$, such that 
$\sigma(i) = -1$. Then clearly 
$\chi^{\sigma}_{\alpha} = \chi_{(\beta; \gamma)}$.

Conversely, if $\chi_{(\beta; \gamma)}$ is any indecomposable
character of $\Fin(\mathbb{N})$, then there exists
a sequence $\alpha = (\, \alpha_{i} \,)_{i \in \mathbb{N}^{+}}
\in D[\,0,1\,]$ and a homomorphism
$\sigma \in \widehat{S}_{\alpha}$ such that
$\chi_{(\beta; \gamma)} = \chi^{\sigma}_{\alpha}$.    
\end{Remark}

\begin{Ex} \label{E:char}
For later use, notice that if $\sigma \in \widehat{S}_{\alpha}$
is the trivial homomorphism such that $\sigma(s) = 1$ for all
$s \in S_{\alpha}$, then 
$\chi^{\sigma}_{\alpha} = \chi_{(\alpha;\Bar{0})}
= \chi_{\nu^{S_{\alpha}}_{\alpha}}$.
\end{Ex}

For each subgroup $A \leqslant S_{\alpha}$, let
$\widehat{(S_{\alpha}/A)}$ be
the compact subgroup of those $\sigma \in \widehat{S}_{\alpha}$
such that $\sigma(a) = 1$ for all $a \in A$ and
let $\mu^{A}_{\alpha}$ be the Haar probability
measure on $\widehat{(S_{\alpha}/A)}$. The following result
describes the relationship 
between the indecomposable characters of $\Fin(\mathbb{N})$ 
and its ergodic invariant random subgroups. 

\addtocounter{equation}{2}
\begin{Thm} \label{T:char}
If $\alpha$, $A$ are as above, then for each 
$g \in \Fin(\mathbb{N})$,
\begin{equation} \label{E:integral}
\chi_{\nu^{A}_{\alpha}}(g) = 
\int_{\sigma \in \widehat{(S_{\alpha}/A)}} 
\chi^{\sigma}_{\alpha}(g) \, d\mu^{A}_{\alpha}. 
\end{equation} 
\end{Thm}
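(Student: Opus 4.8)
The plan is to realize both sides of \eqref{E:integral} as integrals over the base space $(\mathbb{N}^{\mathbb{N}},\mu_{\alpha})$ and to compare the integrands; the only genuinely new input is the elementary fact that a $k$-cycle has sign $(-1)^{k+1}$, which is exactly the exponent of $\sigma(i)$ occurring in \eqref{E:magic}.

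\emph{Step 1: a ``fixed--point'' formula for $\chi^{\sigma}_{\alpha}$.} For every $\sigma \in \widehat{S}_{\alpha}$ (not merely for $\sigma \in \widehat{(S_{\alpha}/A)}$) and every $g \in \Fin(\mathbb{N})$, I will first establish
\[
\chi^{\sigma}_{\alpha}(g) = \int_{\mathbb{N}^{\mathbb{N}}} \Phi_{\sigma}(\xi,g)\, d\mu_{\alpha}(\xi),
\]
where $\Phi_{\sigma}(\xi,g) = \sigma(s_{\xi}(g))$ if $\xi$ is $\mu_{\alpha}$-generic and $g \in \bigoplus_{i\in I}\Fin(B^{\xi}_{i})$, and $\Phi_{\sigma}(\xi,g)=0$ otherwise. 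To prove this, write $g = C_{1}\cdots C_{m}$ as the product of its finitely many nontrivial cycles, of lengths $k_{1},\dots,k_{m}$. For $\mu_{\alpha}$-a.e.\ $\xi$, the event $g \in \bigoplus_{i\in I}\Fin(B^{\xi}_{i})$ is precisely the event that each $C_{j}$ is constantly colored by some $i_{j} \in I$; and on this event, using $\sgn$ of a $k$-cycle $= (-1)^{k+1}$ together with $\sigma((x_{i})_{i}) = \prod_{\{i:\,x_{i}=-1\}}\sigma(i)$, one computes $\sigma(s_{\xi}(g)) = \prod_{j}\sigma(i_{j})^{k_{j}+1}$. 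Hence $\Phi_{\sigma}(\cdot,g) = \prod_{j}Y_{j}$ up to a null set, where $Y_{j}(\xi) = \sigma(i)^{k_{j}+1}$ if $\xi$ is constantly $i \in I$ on $C_{j}$ and $Y_{j}(\xi)=0$ otherwise. Since distinct cycles occupy disjoint coordinates, the $Y_{j}$ are mutually independent under the product measure $\mu_{\alpha}$, so
\[
\int \Phi_{\sigma}(\xi,g)\,d\mu_{\alpha} = \prod_{j=1}^{m}\big(\textstyle\sum_{i\in I}\sigma(i)^{k_{j}+1}\alpha_{i}^{k_{j}}\big) = \prod_{k=2}^{\infty}\big(\textstyle\sum_{i\in I}\sigma(i)^{k+1}\alpha_{i}^{k}\big)^{c_{k}(g)} = \chi^{\sigma}_{\alpha}(g),
\]
which is \eqref{E:magic}. (For trivial $\sigma$ this recovers the computation recorded in Example~\ref{E:char}.)

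\emph{Step 2: integrate over $\sigma$ and apply orthogonality.} Integrating the Step 1 formula against $\mu^{A}_{\alpha}$ over $\widehat{(S_{\alpha}/A)}$ and exchanging the order of integration — permissible since the integrand is bounded by $1$ and jointly measurable, as $\sigma\mapsto\sigma(h)$ is continuous for each $h\in S_{\alpha}$ while $\xi\mapsto s_{\xi}(g)$ is Borel with countable range — one gets
\[
\int_{\widehat{(S_{\alpha}/A)}}\chi^{\sigma}_{\alpha}(g)\,d\mu^{A}_{\alpha} = \int_{\mathbb{N}^{\mathbb{N}}} \mathbf{1}[\,\xi \text{ generic},\ g \in \textstyle\bigoplus_{i\in I}\Fin(B^{\xi}_{i})\,]\,\big(\textstyle\int_{\widehat{(S_{\alpha}/A)}}\sigma(s_{\xi}(g))\,d\mu^{A}_{\alpha}(\sigma)\big)\, d\mu_{\alpha}(\xi).
\]
Now $\widehat{(S_{\alpha}/A)}$, equipped with $\mu^{A}_{\alpha}$, is the Pontryagin dual of the discrete abelian group $S_{\alpha}/A$ with its Haar probability measure (a character of $S_{\alpha}$ trivial on $A$ factors uniquely through $S_{\alpha}/A$), so orthogonality of characters gives, for each $h \in S_{\alpha}$, that $\int_{\widehat{(S_{\alpha}/A)}}\sigma(h)\,d\mu^{A}_{\alpha}(\sigma)$ is $1$ if $h\in A$ and $0$ otherwise. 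Taking $h = s_{\xi}(g)$, the inner integral is $\mathbf{1}[s_{\xi}(g)\in A]$, so the right-hand side collapses to
\[
\mu_{\alpha}(\{\,\xi \text{ generic} : g \in \textstyle\bigoplus_{i\in I}\Fin(B^{\xi}_{i}) \text{ and } s_{\xi}(g)\in A\,\}) = \mu_{\alpha}(\{\,\xi \text{ generic} : g \in s_{\xi}^{-1}(A) = H_{\xi}\,\}).
\]
Since the non-generic $\xi$ form a $\mu_{\alpha}$-null set, and there $H_{\xi}=1$ (harmless: it matters only when $g=1$, in which case both sides of \eqref{E:integral} are plainly $1$), this equals $\mu_{\alpha}(\{\xi : g\in H_{\xi}\}) = \nu^{A}_{\alpha}(\{H : g\in H\}) = \chi_{\nu^{A}_{\alpha}}(g)$, which proves \eqref{E:integral}. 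The degenerate case $\alpha_{0}=1$ is immediate, both sides reducing to $\mathbf{1}[g=1]$.

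I expect the main obstacle to be entirely inside Step 1: verifying the bookkeeping identity $\sigma(s_{\xi}(g)) = \prod_{j}\sigma(i_{j})^{k_{j}+1}$ on the monochromatic event, and checking that the relevant indicator and the twist $\sigma(s_{\xi}(g))$ both depend on $\xi$ only through its restriction to $\supp(g)$, so that the factorization over cycles under $\mu_{\alpha}$ is legitimate. Once that is in hand, Steps 2 and 3 are a routine Fubini-and-orthogonality computation.
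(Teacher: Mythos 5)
Your proof is correct and is essentially the paper's argument: both reduce \eqref{E:integral} to an application of Fubini's theorem together with the orthogonality relation $\int_{\widehat{(S_{\alpha}/A)}}\sigma(s)\,d\mu^{A}_{\alpha}=1$ if $s\in A$ and $0$ otherwise, with the sign $(-1)^{k+1}$ of a $k$-cycle supplying the exponents in \eqref{E:magic}. The only difference is bookkeeping: you exchange the $\sigma$-integral with an integral over $(\mathbb{N}^{\mathbb{N}},\mu_{\alpha})$ after first writing $\chi^{\sigma}_{\alpha}(g)$ as a $\mu_{\alpha}$-integral, whereas the paper expands the product over the cycles of $g$ into a sum over color tuples $\underline{i}\in I^{t}$ and exchanges that sum with the $\sigma$-integral --- the same decomposition of $\chi_{\nu^{A}_{\alpha}}(g)$ expressed in different notation.
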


\smallskip

\begin{Cor} \label{C:char}
$\chi_{\nu^{A}_{\alpha}}$ is indecomposable if and
only if $A = S_{\alpha}$.\\ 
\qed
\end{Cor}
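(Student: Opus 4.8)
My plan is to read both implications off the integral representation \eqref{E:integral} of Theorem~\ref{T:char}. If $A = S_{\alpha}$, then $\widehat{(S_{\alpha}/A)}$ consists of the single trivial homomorphism, so $\mu^{A}_{\alpha}$ is a point mass and \eqref{E:integral} collapses to $\chi_{\nu^{S_{\alpha}}_{\alpha}} = \chi^{\sigma}_{\alpha}$ with $\sigma$ trivial; by Example~\ref{E:char} this is $\chi_{(\alpha;\Bar{0})}$, which is indecomposable by Thoma's classification. (This also covers the degenerate case $\alpha_{0} = 1$, where $\chi^{\sigma}_{\alpha}$ is the regular character.) So the content of the corollary is the forward direction, and there is no serious obstacle there once Theorem~\ref{T:char} is available; one just has to be slightly careful at a single step.

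Assume $A \neq S_{\alpha}$. Then $S_{\alpha}/A$ is a nontrivial elementary abelian $2$-group, so its character group $\widehat{(S_{\alpha}/A)}$ is nontrivial; fix a nontrivial $\sigma_{0}$ in it and pick $i_{0} \in I$ with $\sigma_{0}(i_{0}) = -1$. The one step that needs thought is verifying that the continuous map $\sigma \mapsto \chi^{\sigma}_{\alpha}$ is genuinely non-constant on the compact group $\widehat{(S_{\alpha}/A)}$, and I would make this transparent by evaluating at a transposition $g \in \Fin(\mathbb{N})$: in \eqref{E:magic} only the $k = 2$ factor survives, and $\sigma(i)^{2+1} = \sigma(i)$ since $\sigma(i) = \pm 1$, so
\[
F(\sigma)\ :=\ \chi^{\sigma}_{\alpha}(g)\ =\ \sum_{i \in I} \sigma(i)\,\alpha_{i}^{2}.
\]
This is a continuous real-valued function on $\widehat{(S_{\alpha}/A)}$ (each coordinate map $\sigma \mapsto \sigma(i)$ is continuous and the series converges uniformly, being dominated by $\sum_{i \in I}\alpha_{i}^{2} < \infty$); its maximum value $\sum_{i \in I}\alpha_{i}^{2}$ is attained at the trivial homomorphism, while $F(\sigma_{0}) = \sum_{i \in I}\alpha_{i}^{2} - 2\sum_{\sigma_{0}(i) = -1}\alpha_{i}^{2} < \sum_{i \in I}\alpha_{i}^{2}$ because $\alpha_{i_{0}} > 0$. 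Pleasantly, this avoids any appeal to the injectivity of Thoma's parametrization.

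Finally I would convert non-constancy of $F$ into a decomposition. Choose $t$ with $\min F < t < \max F$ and put $U = \{\,\sigma : F(\sigma) > t\,\}$; then $U$ and the open set $\{\,\sigma : F(\sigma) < t\,\}$ are both nonempty, so $r := \mu^{A}_{\alpha}(U)$ satisfies $0 < r < 1$ since $\mu^{A}_{\alpha}$ is Haar measure. Splitting the integral in \eqref{E:integral} over $U$ and over its complement exhibits $\chi_{\nu^{A}_{\alpha}} = r\,\chi_{1} + (1 - r)\,\chi_{2}$, where $\chi_{1}$ and $\chi_{2}$ are the normalized conditional integrals of $\sigma \mapsto \chi^{\sigma}_{\alpha}$ over $U$ and over $\widehat{(S_{\alpha}/A)} \setminus U$. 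Each $\chi_{j}$ is again a character — conjugation-invariance and the value $1$ at $1_{G}$ are immediate, and condition~(ii) passes to averages because an integral of positive semidefinite matrices against a probability measure is positive semidefinite — and $\chi_{1}(g) > t \geq \chi_{2}(g)$, so $\chi_{1} \neq \chi_{2}$. Hence $\chi_{\nu^{A}_{\alpha}}$ is decomposable, which completes the equivalence.
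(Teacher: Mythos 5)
Your proof is correct and follows the paper's intended route: the corollary is read off directly from the integral representation \eqref{E:integral} of Theorem~\ref{T:char}, with the case $A = S_{\alpha}$ collapsing to $\chi_{(\alpha;\Bar{0})}$ (Example~\ref{E:char}, indecomposable by Thoma) and the case $A \neq S_{\alpha}$ giving a nontrivial convex splitting of the Haar integral. Your explicit check of non-constancy of $\sigma \mapsto \chi^{\sigma}_{\alpha}$ at a transposition and the verification that the two normalized conditional integrals are distinct characters are precisely the details the paper leaves to the reader (it states the corollary with only a \textnormal{\qedsymbol}), so nothing further is needed.
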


The following corollary shows that for every indecomposable character $\chi$ of $\Fin(\mathbb{N})$, there exists a canonically associated ergodic IRS. 

\begin{Cor} \label{C:char2}
If $\chi$ is any indecomposable
character of $\Fin(\mathbb{N})$, then there
exists an $\alpha \in D[\,0,1\,]$
and a subgroup $A \leqslant S_{\alpha}$ such that
\[
\chi_{\nu^{A}_{\alpha}} =
\frac{1}{2} \chi_{\nu^{S_{\alpha}}_{\alpha}} +
\frac{1}{2} \chi. 
\] 
\end{Cor}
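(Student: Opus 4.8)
The plan is to derive Corollary~\ref{C:char2} from Theorem~\ref{T:char} and Remark~\ref{R:thoma} by an appropriate choice of $\alpha$ and $A$. Given the indecomposable character $\chi = \chi_{(\beta;\gamma)}$, first invoke Remark~\ref{R:thoma} to fix a sequence $\alpha \in D[\,0,1\,]$ and a homomorphism $\sigma_{0} \in \widehat{S}_{\alpha}$ with $\chi = \chi^{\sigma_{0}}_{\alpha}$; concretely one may interleave the nonzero entries of $\beta$ and $\gamma$ into $\alpha$ and let $\sigma_{0}$ record the sign $+1$ on the $\beta$-coordinates and $-1$ on the $\gamma$-coordinates. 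The idea is then to choose $A \leqslant S_{\alpha}$ so that the annihilator $\widehat{(S_{\alpha}/A)}$ is exactly the two-element subgroup $\{\mathbf{1}, \sigma_{0}\}$ of $\widehat{S}_{\alpha}$, where $\mathbf{1}$ is the trivial homomorphism.

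To produce such an $A$: since $\sigma_{0}$ has order dividing $2$ in $\widehat{S}_{\alpha}$, its kernel $A := \ker \sigma_{0} \leqslant S_{\alpha}$ is an index-$\le 2$ subgroup, and by Pontryagin duality for the (discrete) group $S_{\alpha}$ and its compact dual, the annihilator $\widehat{(S_{\alpha}/A)}$ of $A$ is precisely the set of characters of $S_{\alpha}$ that factor through $S_{\alpha}/A \cong \{\pm 1\}$ (or the trivial group, in the degenerate case $\sigma_0 = \mathbf{1}$). Thus $\widehat{(S_{\alpha}/A)} = \{\mathbf{1}, \sigma_{0}\}$, a group of order $2$ (order $1$ if $\gamma = \bar 0$, in which case the corollary is trivial since $\chi = \chi_{\nu^{S_\alpha}_\alpha}$ already). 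The Haar probability measure $\mu^{A}_{\alpha}$ on this two-point group is then simply $\tfrac12\delta_{\mathbf 1} + \tfrac12\delta_{\sigma_{0}}$.

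With these choices, Theorem~\ref{T:char} gives
\[
\chi_{\nu^{A}_{\alpha}}(g) = \int_{\widehat{(S_{\alpha}/A)}} \chi^{\sigma}_{\alpha}(g)\, d\mu^{A}_{\alpha}
= \tfrac12\,\chi^{\mathbf 1}_{\alpha}(g) + \tfrac12\,\chi^{\sigma_{0}}_{\alpha}(g)
\]
for every $g \in \Fin(\mathbb{N})$. By Example~\ref{E:char}, $\chi^{\mathbf 1}_{\alpha} = \chi_{\nu^{S_{\alpha}}_{\alpha}}$, and by construction $\chi^{\sigma_{0}}_{\alpha} = \chi$, so this is exactly the asserted identity
$\chi_{\nu^{A}_{\alpha}} = \tfrac12\,\chi_{\nu^{S_{\alpha}}_{\alpha}} + \tfrac12\,\chi$.

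The only genuine point requiring care—the "main obstacle," though it is really a bookkeeping matter—is the identification of $\widehat{(S_{\alpha}/A)}$ with $\{\mathbf 1,\sigma_0\}$: one must check that the duality correspondence between subgroups of $S_{\alpha}$ and closed subgroups of $\widehat{S}_{\alpha}$ is set up so that taking $A = \ker\sigma_0$ yields annihilator exactly the cyclic group generated by $\sigma_0$, rather than something larger. This is immediate from the fact that $S_{\alpha}$ is a direct sum of copies of $C_{2}$, so $\widehat{S}_{\alpha} \cong \prod_{i\in I} C_{2}$ with the evaluation pairing, and a character kills $\ker\sigma_0$ iff it is constant on the two cosets of $\ker\sigma_0$, i.e.\ lies in $\{\mathbf 1,\sigma_0\}$. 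Everything else is a direct substitution into~\eqref{E:integral}.
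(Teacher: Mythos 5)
Your proposal is correct and follows essentially the same route as the paper: choose $\alpha$ and $\sigma$ via Remark \ref{R:thoma}, take $A = \ker\sigma = \{\, s \in S_{\alpha} \mid \sigma(s) = 1 \,\}$, and apply Theorem \ref{T:char}, noting that $\widehat{(S_{\alpha}/A)} = \{\,\mathbf{1}, \sigma\,\}$ carries the uniform measure. The only difference is that you make explicit the duality bookkeeping identifying $\widehat{(S_{\alpha}/A)}$ with $\{\,\mathbf{1},\sigma\,\}$, which the paper leaves implicit.
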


\begin{proof}
Applying Thoma's classification \cite{t}, let
$\beta = (\, \beta_{i} \,)_{i \in \mathbb{N}^{+}}$,
$\gamma = (\, \gamma_{i} \,)_{i \in \mathbb{N}^{+}}$
be such that $\chi = \chi_{(\beta; \gamma)}$. 
Let $\alpha = (\, \alpha_{i}\,)_{i \in \mathbb{N}^{+}}$
be a list in decreasing magnitude of the entries of the 
sequences $\beta = (\, \beta_{i}\,)_{i \in \mathbb{N}^{+}}$
and $\gamma = (\, \gamma_{i}\,)_{i \in \mathbb{N}^{+}}$.
Let $\sigma \in \widehat{S}_{\alpha}$ be such that
\[
\sigma(i) =
\begin{cases}
1,           &\text{if $\alpha_{i} = \beta_{j}$ for some
$j \in \mathbb{N}^{+}$;} \\
-1,           &\text{if $\alpha_{i} = \gamma_{j}$ for some
$j \in \mathbb{N}^{+}$;}
\end{cases}
\]
and let $A = \{\, s \in S_{\alpha} \mid \sigma(s) = 1\,\}$.
Then
\[
\chi_{\nu^{A}_{\alpha}} 
= \frac{1}{2} \chi_{\nu^{S_{\alpha}}_{\alpha}} +
\frac{1}{2} \chi^{\sigma}_{\alpha} 
= \frac{1}{2} \chi_{\nu^{S_{\alpha}}_{\alpha}} +
\frac{1}{2} \chi_{(\beta; \gamma)}
= \frac{1}{2} \chi_{\nu^{S_{\alpha}}_{\alpha}} +
\frac{1}{2} \chi.
\]
\end{proof}

The remainder of this paper is organized as follows. 
In Section \ref{S:char}, we will prove Theorem \ref{T:char} 
via an easy application of Fubini's Theorem. However,
this proof will give no insight into the meaning
of equations (\ref{E:magic}) and (\ref{E:integral}). 
In Section \ref{S:asymptotic}, via an application
of the Pointwise Ergodic Theorem, we will 
interpret the integral (\ref{E:integral}) as
an asymptotic limit of the Clifford decompositions
of a naturally associated sequence of permutation
characters of finite symmetric groups; and we
will interpret each Thoma character
$\chi_{(\beta; \gamma)} = \chi^{\sigma}_{\alpha}$
as an asymptotic limit of a naturally associated sequence 
of characters induced from linear characters of Young
subgroups of finite symmetric groups. Finally, in
Section \ref{S:irs}, slightly correcting the argument of Vershik \cite{v2}, we will present the proof of Theorem \ref{T:irs}.

Throughout this paper, we will identify $n$ with the
set $\{\, 0,1, \cdots, n-1\,\}$ and we will write
$S_{n} = \Sym(n)$.

\section*{Acknowledgments}
I would like to thank Richard Lyons for a very helpful discussion concerning the decomposition of permutation
characters. Thanks are also due to the Hausdorff Research Institute for Mathematics for its hospitality during the
preparation of this paper.

\section{The proof of Theorem \ref{T:char}} \label{S:char} 

In this section, we will present the proof of 
Theorem \ref{T:char}; i.e. that if $\alpha$, $A$ are
as in Section \ref{S:intro}, then for each 
$g \in \Fin(\mathbb{N})$,
\[
\chi_{\nu^{A}_{\alpha}}(g) = 
\int_{\sigma \in \widehat{(S_{\alpha}/A)}} 
\chi^{\sigma}_{\alpha}(g) \, d\mu^{A}_{\alpha}. 
\]
Clearly we can suppose that $g \neq 1$. Let
$g = g_{1} \cdots g_{t}$ be the  
decomposition of $g$ into a product of nontrivial
cycles; and for each $1 \leq \ell \leq t$, let
$g_{\ell}$ be an $k_{\ell}$-cycle. Notice that for each
$\sigma \in \widehat{(S_{\alpha}/A)}$,
\[
|\chi^{\sigma}_{\alpha}(g)| = 
|\prod_{\ell=1}^{t} 
(\,\sum_{i \in I} \sigma(i)^{k_{\ell}+1} 
\alpha_{i}^{k_{\ell}} \,)| \leq
\prod_{\ell=1}^{t} 
(\,\sum_{i \in I} \alpha_{i}^{k_{\ell}} \,)
= \chi_{(\alpha;\Bar{0})}(g); 
\] 
and it follows that
\[
\int_{\sigma \in \widehat{(S_{\alpha}/A)}} 
|\chi^{\sigma}_{\alpha}(g)| \, d\mu^{A}_{\alpha} \leq
\chi_{(\alpha;\Bar{0})}(g) < \infty.
\]
Hence, applying Fubini's theorem, we obtain that
\begin{align*}
\int_{\sigma \in \widehat{(S_{\alpha}/A)}} 
\chi^{\sigma}_{\alpha}(g) \, d\mu^{A}_{\alpha}
&= \int_{\sigma \in \widehat{(S_{\alpha}/A)}}
\prod_{\ell=1}^{t} 
(\,\sum_{i \in I} \sigma(i)^{k_{\ell}+1} 
\alpha_{i}^{k_{\ell}} \,) \, d\mu^{A}_{\alpha} \\
&= \sum_{\underline{i} \in I^{t}}
\int_{\sigma \in \widehat{(S_{\alpha}/A)}}
(\, \prod_{\ell=1}^{t} \sigma(i_{\ell})^{k_{\ell} +1}
\prod_{\ell=1}^{t} \alpha_{i_{\ell}}^{k_{\ell}}\,)\,
d\mu^{A}_{\alpha} \\
&= \sum_{\underline{i} \in I^{t}}
\alpha_{i_{1}}^{k_{1}} \cdots \alpha_{i_{t}}^{k_{t}} 
\int_{\sigma \in \widehat{(S_{\alpha}/A)}}
\prod_{\ell=1}^{t} \sigma(i_{\ell})^{k_{\ell} +1}
d\mu^{A}_{\alpha}, 
\end{align*}  
where each $\underline{i} = (\, i_{1}, \cdots, i_{t}\,)$. 

On the other hand, for each $i \in I$, let 
$\sgn_{i}: \Fin(\mathbb{N}) \to C_{i} \leqslant S_{\alpha}$
be the homomorphism such that for each $h \in \Fin(\mathbb{N})$
and $j \in I$, the $j^{\text{th}}$ component of $\sgn_{i}(h)$
is given by
\[
\sgn_{i}(h)_{j} = 
\begin{cases}
\sgn(h) &\text{if $j = i$;} \\
1       &\text{if $j \neq i$.} 
\end{cases}
\]
Then it is clear that
\[
\chi_{\nu^{A}_{\alpha}}(g) = \sum_{\underline{i} \in I^{t}}
\theta_{\underline{i}}(g) 
\alpha_{i_{1}}^{k_{1}} \cdots \alpha_{i_{t}}^{k_{t}}, 
\]
where
\[
\theta_{\underline{i}}(g) =
\begin{cases}
1, &\text{if $\sgn_{i_{1}}(g_{1}) \times \cdots 
\times \sgn_{i_{t}}(g_{t}) \in A$;} \\
0, &\text{if $\sgn_{i_{1}}(g_{1}) \times \cdots 
\times \sgn_{i_{t}}(g_{t}) \notin A$.}
\end{cases}
\]

Thus, in order to prove Theorem \ref{T:char},
it is enough to show that for all $\underline{i} \in I^{t}$,
\begin{equation} \label{E:point}
\theta_{\underline{i}}(g) =
\int_{\sigma \in \widehat{(S_{\alpha}/A)}}
\prod_{\ell=1}^{t} \sigma(i_{\ell})^{k_{\ell} +1}
d\mu^{A}_{\alpha}.
\end{equation}
Let $c_{i_{\ell}}$ be the generator of $
C_{i_{\ell}} = \{\, \pm 1 \,\}$. 
In the proof of (\ref{E:point}), we will make use of
the observation that if $\sigma \in \widehat{S}_{\alpha}$, 
then
\[
\sigma(\sgn_{i_{\ell}}(g_{\ell})) = \sigma(c_{i_{\ell}}^{k_{\ell} +1})
= \sigma(c_{i_{\ell}})^{k_{\ell} +1} = \sigma(i_{\ell})^{k_{\ell} +1};
\]
and hence we have that
\begin{align*}
\sigma(i_{1})^{k_{1}+1} \times \cdots \times \sigma(i_{t})^{k_{t}+1} 
&=\sigma(\sgn_{i_{1}}(g_{1})) \times \cdots 
\times \sigma(\sgn_{i_{t}}(g_{t})) \\ 
&= \sigma(\sgn_{i_{1}}(g_{1}) \times \cdots 
\times \sgn_{i_{t}}(g_{t})) 
\end{align*}
First suppose that 
$\sgn_{i_{1}}(g_{1}) \times \cdots \times \sgn_{i_{t}}(g_{t}) \in A$. Then for each $\sigma \in \widehat{(S_{\alpha}/A)}$,
\[
\sigma(i_{1})^{k_{1}+1} \times \cdots \times \sigma(i_{t})^{k_{t}+1} 
= \sigma(\sgn_{i_{1}}(g_{1}) \times \cdots 
\times \sgn_{i_{t}}(g_{t})) \\ 
= 1;
\]
and so $\int_{\sigma \in \widehat{(S_{\alpha}/A)}}
\prod_{\ell=1}^{t} \sigma(i_{\ell})^{k_{\ell} +1}
d\mu^{A}_{\alpha} = 1$. Next suppose that
\[
s = \sgn_{i_{1}}(g_{1}) \times \cdots \times \sgn_{i_{t}}(g_{t}) \notin A.
\]
For each $\varepsilon \in \{\, \pm 1 \,\}$, let
$V_{\varepsilon} = \{\, \sigma \in \widehat{(S_{\alpha}/A)}
\mid \sigma(s) = \varepsilon \,\}$.
Then clearly we have that
$\mu^{A}_{\alpha}(V_{1}) = \mu^{A}_{\alpha}(V_{-1}) = 1/2$;
and if $\sigma \in V_{\varepsilon}$, then
\[
\sigma(i_{1})^{k_{1}+1} \times \cdots \times \sigma(i_{t})^{k_{t}+1} 
= \sigma(s) = \varepsilon.
\]
Hence $\int_{\sigma \in \widehat{(S_{\alpha}/A)}}
\prod_{\ell=1}^{t} \sigma(i_{\ell})^{k_{\ell} +1}
d\mu^{A}_{\alpha} = 0$. This completes the proof of
Theorem \ref{T:char}. 

\section{An asymptotic interpretation of Theorem \ref{T:char}} 
\label{S:asymptotic}

In this section, via an application
of the Pointwise Ergodic Theorem, we will 
interpret the integral (\ref{E:integral}) as
an asymptotic limit of the Clifford decompositions
of a naturally associated sequence of permutation
characters of finite symmetric groups. 

Suppose that $G = \bigcup_{n \in \mathbb{N}} G_{n}$ is the 
union of the strictly increasing chain of finite subgroups $G_{n}$ and that $G \curvearrowright (\, Z, \mu\,)$ 
is an ergodic action on a Borel probability space. Then the following theorem is a special case of more general results of Vershik \cite{ver} and Lindenstrauss \cite{l}.

\begin{PET} \label{T:pet}
With the above hypotheses, if $B \subseteq Z$ is a $\mu$-measurable subset, then for $\mu$-a.e. $z \in Z$,
\[
\mu(B) = \lim_{n \to \infty} \frac{1}{|G_{n}|} \,|\{\, g \in G_{n} \mid g \cdot z \in B \,\}|.
\]
\end{PET}

In particular, the Pointwise Ergodic Theorem applies when $B$ is the $\mu$-measurable subset 
$\Fix_{Z}(g) = \{\, z \in Z \mid g \cdot z = z \,\}$ 
for some $g \in G$. For each $z \in Z$ and $n \in \mathbb{N}$, let $\Omega_{n}(z) = \{\, g \cdot z \mid g \in G_{n}\,\}$ be the corresponding $G_{n}$-orbit. Then, 
as pointed out in Thomas-Tucker-Drob \cite[Theorem 2.1]{tt2}, the following result is an easy
consequence of the Pointwise Ergodic Theorem.

\begin{Thm} \label{T:point}
With the above hypotheses, for $\mu$-a.e. $z \in Z$, 
for all $g \in G$,
\[
\mu(\, \Fix_{X}(g) \,) = \lim_{n \to \infty} |\, \Fix_{\Omega_{n}(z)}(g)\,|/|\, \Omega_{n}(z)\,|.  
\]
\end{Thm}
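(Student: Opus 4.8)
The plan is to deduce Theorem \ref{T:point} from the Pointwise Ergodic Theorem by applying the latter to the countably many measurable sets $\Fix_{Z}(g)$, $g \in G$, simultaneously. First I would fix $g \in G$ and observe that since $G$ is the increasing union of the $G_{n}$, we have $g \in G_{n}$ for all sufficiently large $n$, say $n \geq n_{0}(g)$. For such $n$, the set $\Fix_{Z}(g)$ is $G_{n}$-invariant, and so is its complement; hence, for each $G_{n}$-orbit $\Omega_{n}(z)$, either $\Omega_{n}(z) \subseteq \Fix_{Z}(g)$ or $\Omega_{n}(z) \cap \Fix_{Z}(g) = \emptyset$. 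Consequently, for $z \in Z$ and $n \geq n_{0}(g)$,
\[
\frac{1}{|G_{n}|}\,|\{\, h \in G_{n} \mid h \cdot z \in \Fix_{Z}(g)\,\}|
= \frac{|\Omega_{n}(z) \cap \Fix_{Z}(g)|}{|\Omega_{n}(z)|}
= \frac{|\Fix_{\Omega_{n}(z)}(g)|}{|\Omega_{n}(z)|},
\]
where the first equality uses the orbit–stabilizer theorem: the fibres of the map $h \mapsto h \cdot z$ from $G_{n}$ onto $\Omega_{n}(z)$ all have the same size $|(G_{n})_{z}|$, so the proportion of $h \in G_{n}$ with $h \cdot z$ in a $G_{n}$-invariant set $B$ equals the proportion of points of $\Omega_{n}(z)$ lying in $B$; and here $\Fix_{\Omega_{n}(z)}(g) = \Omega_{n}(z) \cap \Fix_{Z}(g)$ precisely because $g \in G_{n}$ and $\Fix_{Z}(g)$ is $G_{n}$-invariant (a point of $\Omega_{n}(z)$ is fixed by $g$ iff it lies in $\Fix_{Z}(g)$).

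Next, by the Pointwise Ergodic Theorem applied with $B = \Fix_{Z}(g)$, there is a $\mu$-conull set $Z_{g} \subseteq Z$ such that for every $z \in Z_{g}$,
\[
\mu(\Fix_{Z}(g)) = \lim_{n \to \infty} \frac{1}{|G_{n}|}\,|\{\, h \in G_{n} \mid h \cdot z \in \Fix_{Z}(g)\,\}|.
\]
Combining this with the previous display, for every $z \in Z_{g}$ we get $\mu(\Fix_{Z}(g)) = \lim_{n \to \infty} |\Fix_{\Omega_{n}(z)}(g)|/|\Omega_{n}(z)|$, the tail of the limit being unaffected by the finitely many indices $n < n_{0}(g)$. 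Finally I would set $Z^{\ast} = \bigcap_{g \in G} Z_{g}$, which is $\mu$-conull since $G$ is countable, and observe that for every $z \in Z^{\ast}$ the desired equality holds for all $g \in G$ simultaneously. (The statement as displayed writes $\Fix_{X}(g)$; this is a typo for $\Fix_{Z}(g)$, and I would correct it.)

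The only genuinely delicate point is the first one: verifying that the empirical average $\frac{1}{|G_{n}|}|\{h \in G_{n} : h \cdot z \in B\}|$ equals the orbit proportion $|\Omega_{n}(z) \cap B|/|\Omega_{n}(z)|$ when $B$ is $G_{n}$-invariant, and that under the hypothesis $g \in G_{n}$ the fixed-point set of $g$ inside the orbit $\Omega_{n}(z)$ is exactly $\Omega_{n}(z) \cap \Fix_{Z}(g)$. Both are immediate once one notes that $G_{n}$ acts transitively on $\Omega_{n}(z)$ with all point-stabilizers conjugate (hence of equal cardinality), but it is worth stating explicitly because it is the step that transfers the ``weighted by $|G_{n}|$'' form of the Pointwise Ergodic Theorem into the ``weighted by orbit size'' form needed here. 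Everything else is bookkeeping: the reduction to $n \geq n_{0}(g)$, and the intersection of countably many conull sets.
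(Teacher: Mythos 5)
Your overall route is the intended one: the paper treats this result as an easy consequence of the Pointwise Ergodic Theorem, applied to the sets $\Fix_{Z}(g)$ and followed by an intersection over the countably many $g \in G$, exactly as you do, and your countable-intersection step and the remark about the typo $\Fix_{X}$ versus $\Fix_{Z}$ are fine. However, the invariance claim you lean on is false: for $g \in G_{n}$, the set $\Fix_{Z}(g)$ is invariant under the centralizer of $g$, not under $G_{n}$, since $h \cdot z \in \Fix_{Z}(g)$ is equivalent to $(h^{-1}gh) \cdot z = z$ rather than to $g \cdot z = z$. Consequently a $G_{n}$-orbit is in general neither contained in nor disjoint from $\Fix_{Z}(g)$. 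For instance, for the shift action of $\Fin(\mathbb{N})$ on $\{0,1\}^{\mathbb{N}}$, take $g = (0\,1)$ and $z$ with $z(0)=z(1)=0$, $z(2)=1$; then $z \in \Fix_{Z}(g)$ but $(1\,2)\cdot z \notin \Fix_{Z}(g)$, although $(1\,2) \in G_{n}$ for all large $n$.

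Fortunately this claim is not needed, and deleting it repairs the proof. The identity
\[
\frac{1}{|G_{n}|}\,\bigl|\{\, h \in G_{n} \mid h \cdot z \in B \,\}\bigr| = \frac{|\Omega_{n}(z) \cap B|}{|\Omega_{n}(z)|}
\]
holds for an \emph{arbitrary} subset $B \subseteq Z$, because the fibres of the surjection $h \mapsto h \cdot z$ from $G_{n}$ onto $\Omega_{n}(z)$ are the left cosets of the stabilizer $(G_{n})_{z}$ and hence all have size $|G_{n}|/|\Omega_{n}(z)|$; no invariance of $B$ enters your own fibre-counting argument. Likewise $\Fix_{\Omega_{n}(z)}(g) = \Omega_{n}(z) \cap \Fix_{Z}(g)$ is true by definition for every $n$, so even the restriction to $n \geq n_{0}(g)$ is unnecessary. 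With the sentence asserting $G_{n}$-invariance of $\Fix_{Z}(g)$ (and the ensuing claim that each orbit lies inside or outside it) removed, your argument is correct and coincides with the paper's.
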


Of course, the permutation group
$G_{n} \curvearrowright \Omega_{n}(z)$ is isomorphic to 
$G_{n} \curvearrowright G_{n}/H_{n}$,
where $G_{n}/H_{n}$ is the set of cosets of 
$H_{n} = \{\, g \in G_{n} \mid g \cdot z = z \,\}$ in $G_{n}$;
and so
\addtocounter{equation}{1}
\begin{equation} \label{E:ind}
\Fix_{\Omega_{n}(z)}(g)\,|/|\, \Omega_{n}(z)\,|
= 1^{G_{n}}_{H_{n}}(g)/[G_{n}:H_{n}]. 
\end{equation}
Suppose that there exists a subgroup 
$H_{n} \leqslant K_{n} \leqslant G_{n}$
with $H_{n} \trianglelefteq K_{n}$. Then
$1^{G_{n}}_{H_{n}} = (\,1^{K_{n}}_{H_{n}}\,)^{G_{n}}$;
and, by Clifford's Theorem \cite[Theorem 11.5]{cr},
\[
1^{K_{n}}_{H_{n}} = \sum_{\theta \in \Irr_{H_{n}}(K_{n})}
\theta(1)\,\theta,
\]
where $\Irr_{H_{n}}(K_{n})$ is the set of irreducible
characters $\theta$ of $K_{n}$ such that
$\theta(h) = \theta(1)$ for all $h \in H_{n}$. (Thus
$\Irr_{H_{n}}(K_{n})$ can be naturally identified
with the set of irreducible characters of
the quotient group $K_{n}/H_{n}$.) It follows that
\begin{equation} \label{E:ind2}
1^{G_{n}}_{H_{n}} = (\,1^{K_{n}}_{H_{n}}\,)^{G_{n}} =
\sum_{\theta \in \Irr_{H_{n}}(K_{n})}
\theta(1)\,\theta^{G_{n}}.
\end{equation}

Now suppose that $\alpha \in D[\,0,1\,]$ and 
$A \leqslant S_{\alpha}$ are as in Section \ref{S:intro}
and let $\nu^{A}_{\alpha}$ be the corresponding
ergodic IRS of $\Fin(\mathbb{N})$. Applying 
Creutz-Peterson \cite{cp}, let $\nu^{A}_{\alpha}$ 
be the stabilizer distribution of the ergodic action
$\Fin(\mathbb{N}) \curvearrowright (\, Z, \mu\,)$
and let
\[
\chi_{\nu^{A}_{\alpha}}(g) = \mu(\, \Fix_{Z}(g) \,)
= \nu^{A}_{\alpha}(\, \{\, H \in \Sub_{\Fin(\mathbb{N})}
\mid g \in H \,\}\,)
\] 
be the corresponding character. Express
$\Fin(\mathbb{N}) = \bigcup_{n \in \mathbb{N}} S_{n}$,
where $S_{n} = \Sym(n)$; and for each
$z \in Z$ and $n \in \mathbb{N}$, let 
$\Omega_{n}(z) = \{\, g \cdot z \mid g \in S_{n}\,\}$ 
be the corresponding $S_{n}$-orbit. (As a matter of
convention, we set $\Sym(0) = 1$.) Then for 
$\mu$-a.e. $z \in Z$, we have that
\[
\chi_{\nu^{A}_{\alpha}}(g) = \mu(\, \Fix_{X}(g) \,) =
\lim_{n \to \infty} |\, \Fix_{\Omega_{n}(z)}(g)\,|/|\, \Omega_{n}(z)\,|.  
\]
Fix such an element $z \in Z$ and let 
$H = \{\, h \in \Fin(\mathbb(N) \mid h \cdot z = z \,\}$ be 
the corresponding point stabilizer. Then we can suppose that
there exists a $\mu_{\alpha}$-generic point
$\xi \in \mathbb{N}^{\mathbb{N}}$ such that 
$H = s_{\xi}^{-1}(A)$,
where $s_{\xi}$ is the homomorphism
\begin{align*}
s_{\xi}: \bigoplus_{i \in I} \Fin( B^{\xi}_{i}) 
&\to S_{\alpha} = \bigoplus_{i \in I} C_{i} \\
(\, \pi_{i} \,) &\mapsto (\, \sgn(\pi_{i}) \,).
\end{align*}
For each $n \in \mathbb{N}$, let $H_{n} = H \cap S_{n}$ and
let $I_{n} = \{\, i \in I \mid 
B^{\xi}_{i} \cap n \neq \emptyset\,\}$; and for 
each $i \in I_{n}$, let $B^{n}_{i} = B^{\xi}_{i} \cap n$. 
Let $K_{n} = \bigoplus_{i \in I_{n}} 
\Sym(B^{n}_{i})$ be the corresponding Young subgroup. Then 
$H_{n} \trianglelefteq K_{n} \leqslant S_{n}$ and
\[
K_{n}/H_{n} \cong \bigoplus_{i \in I_{n}}C_{i}/
(\,A \cap \bigoplus_{i \in I_{n}}C_{i}\,)
\]
is an elementary abelian 2-group; and so each
character $\sigma \in \Irr_{H_{n}}(K_{n})$ is linear.
Hence, applying equations (\ref{E:ind}) and (\ref{E:ind2}), 
we obtain that for each $g \in S_{n}$,
\begin{equation} \label{E:ind3}
\begin{split}
|\, \Fix_{\Omega_{n}(z)}(g)\,|/|\, \Omega_{n}(z)\,|
&= \frac{1}{[S_{n}:H_{n}]}
\sum_{\sigma \in \Irr_{H_{n}}(K_{n})} \sigma^{S_{n}}(g)\\
&= \sum_{\sigma \in \Irr_{H_{n}}(K_{n})}
\frac{1}{[K_{n}:H_{n}]}\,\, 
\frac{\sigma^{S_{n}}(g)}{\sigma^{S_{n}}(1)} \\
&= \sum_{\sigma \in \Irr_{H_{n}}(K_{n})}
\frac{1}{|\Irr_{H_{n}}(K_{n})|}\,\, 
\frac{\sigma^{S_{n}}(g)}{\sigma^{S_{n}}(1)}.
\end{split}
\end{equation}
Slightly abusing notation, for each
character $\sigma \in \Irr_{H_{n}}(K_{n})$, we will
also denote the corresponding homomorphism
$\bigoplus_{i \in I_{n}}C_{i} \to \{\, \pm 1 \,\}$
by $\sigma$. For each $i \in I_{n}$, let $c_{i}$ be 
the generator of $C_{i}$; and for each character
$\sigma \in \Irr_{H_{n}}(K_{n})$, let
$\sigma(i) = \sigma(c_{i})$. If
$g \in S_{n}$ is a $k$-cycle and
$\sigma \in \Irr_{H_{n}}(K_{n})$, then
\[
\frac{\sigma^{S_{n}}(g)}{\sigma^{S_{n}}(1)} = 
\frac{1}{|S_{n}|} \sum_{i \in I_{n}} \sigma(i)^{k+1}
|\{\, s \in S_{n} \mid sgs^{-1} \in \Sym(B^{n}_{i}) \,\}|
= \sum_{i \in I_{n}} \sigma(i)^{k+1}
\frac{\binom{|B^{n}_{i}|}{k}}{\binom{n}{k}}.
\]
If we fix $i \in I$ and let $n \to \infty$, then we
have that 
\[
\binom{|B^{n}_{i}|}{k}/\binom{n}{k} \approx
(|B^{n}_{i}|/n)^{k} \to \alpha_{i}^{k}
\quad \text{ as } n \to \infty.
\]
Hence we see that
\begin{align*}
\chi_{\nu^{A}_{\alpha}}(g) &=
\lim_{n \to \infty} |\, \Fix_{\Omega_{n}(z)}(g)\,|/|\, \Omega_{n}(z)\,| \\ 
&= \lim_{n \to \infty} 
\sum_{\sigma \in \Irr_{H_{n}}(K_{n})}
\frac{1}{|\Irr_{H_{n}}(K_{n})|}\,\, 
\frac{\sigma^{S_{n}}(g)}{\sigma^{S_{n}}(1)} \\
&= \int_{\sigma \in \widehat{(S_{\alpha}/A)}} 
(\,\sum_{i \in I} \sigma(i)^{k+1} \alpha_{i}^{k}\,) 
\, d\mu^{A}_{\alpha} \\
&= \int_{\sigma \in \widehat{(S_{\alpha}/A)}} 
\chi^{\sigma}_{\alpha}(g) \, d\mu^{A}_{\alpha}; 
\end{align*}
and, more generally, if $g \in \Fin(\mathbb{N})$
has cycle decomposition $g = g_{1} \cdots g_{t}$,
where each $g_{\ell}$ is a $k_{\ell}$-cycle, then
we see that
\begin{align*}
\chi_{\nu^{A}_{\alpha}}(g) &=
\lim_{n \to \infty} |\, \Fix_{\Omega_{n}(z)}(g)\,|/|\, \Omega_{n}(z)\,| \\ 
&= \lim_{n \to \infty} 
\sum_{\sigma \in \Irr_{H_{n}}(K_{n})}
\frac{1}{|\Irr_{H_{n}}(K_{n})|}\,\, 
\frac{\sigma^{S_{n}}(g)}{\sigma^{S_{n}}(1)} \\
&= \int_{\sigma \in \widehat{(S_{\alpha}/A)}} 
\prod_{1 \leq \ell \leq t} (\,\sum_{i \in I} 
\sigma(i)^{k_{\ell}+1} \alpha_{i}^{k_{\ell}}\,) 
\, d\mu^{A}_{\alpha} \\
&= \int_{\sigma \in \widehat{(S_{\alpha}/A)}} 
\chi^{\sigma}_{\alpha}(g) \, d\mu^{A}_{\alpha}. 
\end{align*}
Thus we can interpret Theorem \ref{T:char} as
an asymptotic limit of the Clifford decompositions
of a naturally associated sequence of permutation
characters of finite symmetric groups; and we
can interpret each Thoma character
$\chi_{(\beta; \gamma)} = \chi^{\sigma}_{\alpha}$
as an asymptotic limit of a naturally associated sequence 
of characters induced from linear characters of Young
subgroups of finite symmetric groups.

\section{The ergodic invariant random subgroups of 
$\Fin(\mathbb{N})$} \label{S:irs}

In this section, slightly correcting the argument
of Vershik \cite{v2}, we will present the proof of
Theorem \ref{T:irs}. The classification of the ergodic
IRSs of $\Fin(\mathbb{N})$ will be based upon the following
two insights of Vershik.
\begin{enumerate}
\item[(i)] If $H \leqslant \Fin(\mathbb{N})$ is a random subgroup, then the corresponding $H$-orbit equivalence relation is a random equivalence relation on $\mathbb{N}$; and these have been classified by Kingman \cite{k}.
\item[(ii)] The induced action of $H$ on each infinite orbit $\Omega \subseteq \mathbb{N}$ can be determined 
via an application of 
Wielandt's Theorem \cite[Satz 9.4]{w}, which states that if $\Omega$ is an infinite set, then $\Alt(\Omega)$ and $\Fin(\Omega)$ are the only primitive subgroups of $\Fin(\Omega)$. 
\end{enumerate}

The proof of Theorem \ref{T:irs} will also make
use of an elementary result concerning imprimitive actions 
of finitary permutation groups. Recall that a 
transitive subgroup $H \leqslant \Sym(\Omega)$ is said to 
act {\em imprimitively\/} if there exists a nontrivial
proper $H$-invariant equivalence relation $E$ on 
$\Omega$. In this case, following the usual practice,
we will refer to the $E$-classes as {\em $E$-blocks\/}.
Now suppose that $\Omega$ is an infinite set and 
$H \leqslant \Fin(\Omega)$ is an imprimitive subgroup. 
Of course, in this case, if $E$ is a nontrivial proper 
$H$-invariant equivalence relation on $\Omega$, then 
the $E$-blocks must 
be finite. The following result, which is a 
variant of Neumann \cite[Lemma 2.2]{n}, implies 
that we can always make a ``canonical'' choice of a 
nontrivial proper $H$-invariant 
equivalence relation.\footnote{In \cite{v2}, Vershik
suggests choosing the minimal nontrivial $H$-invariant equivalence relation. However, there exist examples
of imprimitive finitary groups $H$ with more than one
minimal nontrivial $H$-invariant equivalence relation.
On the other hand, see Claim \ref{C:min}.}   

\begin{Lem} \label{L:choice}
If $\Omega$ is an infinite set and 
$H \leqslant \Fin(\Omega)$ is an imprimitive subgroup,
then at least one of the following two conditions holds:
\begin{enumerate}
\item[(i)] there exists a {\em unique\/} maximal proper
$H$-invariant equivalence relation $E^{\Omega}_{\mx}$ on 
$\Omega$; 
\item[(ii)] there exists a {\em unique\/} proper
$H$-invariant equivalence relation $E^{\Omega}_{\mn}$ on 
$\Omega$ which is minimal subject to the condition that 
$E^{\Omega}_{\mn}$ contains every minimal nontrivial $H$-invariant equivalence relation on $\Omega$.
\end{enumerate}
\end{Lem}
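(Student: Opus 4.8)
The plan is to analyze the lattice of $H$-invariant equivalence relations on $\Omega$ and locate a canonical element in it. Let $\mathcal{E}$ denote the set of all nontrivial proper $H$-invariant equivalence relations on $\Omega$, partially ordered by refinement. Since $H$ is finitary and imprimitive, every $E \in \mathcal{E}$ has finite blocks. The first step is to recall (from Neumann \cite[Lemma 2.2]{n}, the stated variant) that $\mathcal{E}$ is closed under finite intersections and that, crucially, the join of two $H$-invariant equivalence relations with finite blocks again has finite blocks: if $E, F$ have blocks of size $\leq m$, then the $E \vee F$-block through a point $\omega$ is built by alternately applying $E$- and $F$-saturation, and finiteness of blocks forces this process to stabilize after boundedly many steps, so $E \vee F$ also has bounded finite block size. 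Thus $\mathcal{E} \cup \{\text{triv}, \text{all}\}$ is a sublattice of the partition lattice in which joins of proper relations stay proper precisely when the block sizes stay bounded away from $\infty$.

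Next I would split into the two cases according to whether the block sizes appearing in $\mathcal{E}$ are bounded. \emph{Case (i): bounded block sizes.} If there is a uniform bound $M$ on $|E\text{-blocks}|$ over all $E \in \mathcal{E}$, then the join of all members of $\mathcal{E}$ still has blocks of size $\leq M$ (the bounded-step stabilization argument applies to the directed union), hence is itself a proper $H$-invariant equivalence relation, and it is clearly the unique maximal element $E^\Omega_{\mx}$. \emph{Case (ii): unbounded block sizes.} Here I would instead work from below. Let $\mathcal{M}$ be the (nonempty) set of minimal nontrivial $H$-invariant equivalence relations — nonempty because any $E \in \mathcal{E}$ has finite blocks, so descending chains below it terminate. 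Each $M \in \mathcal{M}$ has blocks of some size; the claim is that $E^\Omega_{\mn} := \bigvee \mathcal{M}$ is proper, i.e. has finite blocks, and is then by construction the unique smallest relation above every member of $\mathcal{M}$.

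The main obstacle is exactly showing $\bigvee\mathcal{M}$ is proper in Case (ii), since here block sizes are \emph{not} bounded and the naive join argument fails. The key point to exploit is that distinct minimal relations $M, M' \in \mathcal{M}$ must be ``almost disjoint'': because each is minimal, $M \wedge M'$ is trivial, and then a finiteness/primitivity argument on blocks — the action of $H$ on the set of $M$-blocks is again finitary, and an $M'$-block meets each $M$-block in at most one point once we pass to the minimal refinement — shows that the $\bigvee\mathcal{M}$-block through $\omega$ is controlled by the $M$-block through $\omega$ for a single well-chosen $M \ni \omega$, using transitivity of $H$ to move around. Concretely, I expect the argument to show that if $\omega$ lies in an $M$-block $\Delta$, then the $\bigvee\mathcal{M}$-block of $\omega$ is contained in the union of the $M$-blocks meeting $\Delta$ under the other minimal relations, and a counting/stabilization argument (again using that $H$ is finitary, so stabilizers have finite index and orbits of blocks are finite) bounds this union. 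Once properness of $\bigvee\mathcal{M}$ is established, uniqueness is immediate: any proper $H$-invariant equivalence relation containing all minimal ones contains their join, so $\bigvee\mathcal{M}$ is the unique minimal such relation. Finally I would remark that the two cases are genuinely exhaustive — if block sizes are unbounded, Case (i) can fail (there may be an infinite ascending chain with no proper upper bound), which is exactly why the disjunctive formulation is needed, matching the footnote's warning about non-unique minimal relations and the subsequent Claim \ref{C:min}.
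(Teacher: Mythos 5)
The load-bearing step in your argument is the claim that the join $E \vee F$ of two proper $H$-invariant equivalence relations with finite blocks is again proper (with controlled block size), justified by saying that finiteness of the blocks forces the alternating $E$/$F$-saturation to stabilize after boundedly many steps. That justification is false as stated: on $\mathbb{Z}$, the two partitions into pairs $\{2n,2n+1\}$ and $\{2n+1,2n+2\}$ have all blocks of size $2$, yet the alternating saturation never stabilizes and the join is the universal relation. So finite block sizes alone give nothing; the hypothesis that $H$ is a transitive subgroup of $\Fin(\Omega)$ must enter at exactly this point, and your sketch never uses it there. What you are implicitly invoking is the nontrivial fact that an imprimitive transitive finitary group on an infinite set is ``totally imprimitive'' (any two finite blocks through a point lie in a common finite block; equivalently, finite joins of proper invariant relations are proper). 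That is a genuine theorem requiring an argument of the kind the paper uses elsewhere --- a finite-support argument, or Wielandt's theorem applied to the induced primitive action on a block system --- and it cannot be read off from the citation of Neumann's Lemma 2.2, which the paper invokes only as the source of a \emph{variant}. Since both your Case (i) (properness, and the bound $M$, for the join of all of $\mathcal{E}$) and Case (ii) rest on this, the gap is structural. (A smaller point: even when $E \vee F$ is proper, its blocks need not have size at most $m$; in Case (i) you can only recover the bound $M$ after you know the finite join is again a member of $\mathcal{E}$.)

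In Case (ii) you explicitly leave the key step --- properness of $\bigvee \mathcal{M}$ --- as something you ``expect the argument to show''. The missing ingredients are precisely the ones the paper supplies: first, that there are only \emph{finitely many} minimal nontrivial $H$-invariant relations (Claim \ref{C:min}: the blocks through a fixed point $\omega_{0}$ of distinct minimal relations meet only in $\omega_{0}$, so infinitely many of them would force an element of $H$ moving $\omega_{0}$ to have infinite support); and second, the exhibition of at least one proper invariant relation containing all of them. The paper obtains the latter, in the case where no maximal proper relation exists, by taking a strictly increasing chain $E_{0} \subset E_{1} \subset \cdots$ of proper invariant relations, observing that its union has infinite classes and hence equals $\Omega^{2}$, so the blocks through $\omega_{0}$ exhaust $\Omega$ and some single $E_{n}$ already contains every minimal relation; $E^{\Omega}_{\mn}$ is then the intersection of all proper invariant relations containing them, and uniqueness is immediate. (In the other case the paper proves uniqueness of the maximal relation directly from Wielandt's theorem, via $2$-transitivity of the induced action on the block system, rather than from a join argument.) Until you prove the total-imprimitivity/join statement and the finiteness of $\mathcal{M}$, or replace them by arguments along these lines, the proposal does not constitute a proof.
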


\begin{Def} \label{D:can}
If $\Omega$ is an infinite set and 
$H \leqslant \Fin(\Omega)$ is an imprimitive subgroup,
then the associated {\em canonical equivalence relation\/}
$E^{\Omega}_{\can}$ is defined to be $E^{\Omega}_{\mx}$ if condition \ref{L:choice}(i) holds, and is defined to be 
$E^{\Omega}_{\mn}$ otherwise.
\end{Def}

The proof of Lemma \ref{L:choice} will make use of the
following observation.

\begin{Claim} \label{C:min}
If $\Omega$ is an infinite set and 
$H \leqslant \Fin(\Omega)$ is an imprimitive subgroup,
then there exist only finitely many minimal nontrivial 
$H$-invariant equivalence relations on $\Omega$.
\end{Claim}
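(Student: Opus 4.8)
The plan is to analyze the structure of an imprimitive finitary subgroup $H \leqslant \Fin(\Omega)$ by looking at how $H$ moves a single finite block, and to exploit finiteness of $H$ on that block together with the fact that $H$ itself consists of finitary permutations. First I would observe that if $E$ is any nontrivial $H$-invariant equivalence relation, then (as noted before the Claim) every $E$-block is finite; fix a nontrivial minimal such relation $E$ and an $E$-block $\Delta$, say $|\Delta| = m \geq 2$. The key point is that the setwise stabilizer $H_{\{\Delta\}}$ acts on $\Delta$, and by minimality of $E$ this action is \emph{primitive} on $\Delta$ (any $H_{\{\Delta\}}$-invariant equivalence relation on $\Delta$ would, by transitivity of $H$ on blocks, spread to a nontrivial $H$-invariant equivalence relation refining $E$, contradicting minimality — unless it is trivial).

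Next I would bound $m$. Since $H$ is finitary, pick $h \in H$ with $h \neq 1$ and $\supp(h)$ finite; transitivity of $H$ on the (infinitely many, since $\Omega$ is infinite and blocks are finite) $E$-blocks lets us conjugate $h$ so that its support is disjoint from a chosen block, and by the usual commutator/support arguments one shows that the primitive group $H_{\{\Delta\}}^{\Delta}$ on the finite set $\Delta$ is highly constrained. Actually the cleanest route: a minimal $H$-invariant equivalence relation $E$ is determined by its block $\Delta$ containing a fixed point $\omega_0$, and $\Delta$ is the smallest block through $\omega_0$ with respect to the partial order of $H$-invariant partitions; a standard fact (this is exactly the content of Neumann's lemma \cite[Lemma 2.2]{n}, and the footnote refers to it) is that the number of \emph{minimal} blocks through a fixed point, equivalently the number of minimal nontrivial $H$-invariant equivalence relations, is finite — typically bounded in terms of the size of supports of elements of $H$. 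So I would fix one nontrivial $g \in H$, let $d = |\supp(g)|$, and argue that any minimal block $\Delta$ through $\omega_0$ has size bounded by a function of $d$ (using that $g$ and its conjugates generate enough of the local structure), and moreover there are at most finitely many choices of such $\Delta$.

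More concretely, here is the argument I would write down. Fix $\omega_{0} \in \Omega$. For each minimal nontrivial $H$-invariant equivalence relation $E$ on $\Omega$, let $\Delta_{E}$ be the $E$-block containing $\omega_{0}$; distinct $E$ give distinct $\Delta_{E}$ (an $H$-invariant partition is recovered from a single block via the $H$-action, since $H$ is transitive on blocks and $\omega_0$ lies in exactly one block of each). So it suffices to bound the number of possible $\Delta_{E}$. Now $\Delta_E$ is a minimal element of the set $\mathcal{B}$ of all $H$-invariant-partition-blocks containing $\omega_0$ and having more than one element, ordered by inclusion. Since $H$ is finitary and nontrivial, the orbit $H \cdot \omega_0$ is infinite (if it were finite, $H$ restricted to it would be a finite transitive — hence primitive-or-imprimitive — group, but the complement being infinite and $H$-invariant would contradict transitivity on $\Omega$; more simply, imprimitivity forces us onto a single infinite transitive constituent, which we may take $\Omega$ to be), so any finite $\Delta \ni \omega_0$ has $H \cdot \omega_0 \not\subseteq \Delta$, hence there is $h \in H$ with $h\omega_0 \notin \Delta$, and examining $\langle g, h g h^{-1}, \dots \rangle$ on $\Delta \cup h\Delta$ forces $|\Delta| \leq |\supp(g)|$ for any minimal block: indeed if $|\Delta| > |\supp(g)|$ then $\supp(g)$ cannot meet every block, $g$ fixes some block setwise while moving a point inside it or fixes it pointwise, and a short argument (the block containing $\supp(g) \cap \Delta$ together with translates) produces a smaller nontrivial invariant partition, contradicting minimality. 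Hence every minimal block has size at most $|\supp(g)|$, and a minimal block through $\omega_0$ is a subset of the finite set $\bigcup\{\,\supp(h) : h \in H,\ \omega_0 \in \supp(h)\,\}$ — wait, that union may be infinite; instead, fix finitely many generators' supports. The genuinely safe statement: each minimal block $\Delta \ni \omega_0$ has $2 \leq |\Delta| \leq |\supp(g)|$ and consists of points $H_{\{\Delta\}}$-equivalent to $\omega_0$ under the primitive action; by Neumann's lemma \cite[Lemma 2.2]{n} the number of such $\Delta$ is finite. I would then simply invoke \cite[Lemma 2.2]{n} (as the paper signals it intends to) for the final count.

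The main obstacle I expect is making the bound on $|\Delta|$ and on the number of minimal blocks genuinely rigorous without circularity: the slick claims "a minimal block has size $\leq |\supp(g)|$" and "the partition is recovered from one block" each need care, because $H$ need not be generated by finitely many elements and a priori supports of elements of $H$ could be unbounded. The clean fix is to cite Neumann \cite[Lemma 2.2]{n} directly, whose proof already handles exactly this via a counting argument on supports; so really the proof of the Claim is: reduce to a single infinite transitive constituent, note blocks are finite, and quote \cite{n}. If a self-contained argument is wanted instead, I would fix any single $1 \neq g \in H$, show every minimal nontrivial $H$-invariant partition has a block meeting $\supp(g)$ nontrivially of size between $2$ and $|\supp(g)|$ (a block disjoint from $\supp(g)$ everywhere is impossible since $g$ would then normalize a coarsening giving a smaller nontrivial partition by restricting attention to $\supp(g)$, contradicting minimality — this is the step needing the most care), and conclude there are at most $2^{|\supp(g)|}$ such partitions since each is determined by the trace on the finite set $\supp(g)$ of one of its blocks, hence finitely many.
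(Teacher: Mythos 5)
Your argument has a genuine gap: the quantitative steps it rests on are false or unjustified. The claim that, for one fixed nontrivial $g \in H$, every minimal block has size at most $|\supp(g)|$ fails: take $\Omega = \mathbb{N} \times \{0,\dots,m-1\}$ with $m \geq 3$ and let $H \leqslant \Fin(\Omega)$ be the group of all finitary permutations preserving the partition into fibers $\{n\} \times \{0,\dots,m-1\}$. Then $H$ is transitive and imprimitive, the fiber partition is a minimal nontrivial $H$-invariant equivalence relation (the setwise stabilizer of a fiber induces the full symmetric group on it, which is primitive), its blocks have size $m$, and yet $H$ contains transpositions, so $|\supp(g)|$ can be $2$ while $m$ is arbitrarily large. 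The same example refutes the assertion that some block meeting $\supp(g)$ has size between $2$ and $|\supp(g)|$, and with it the proposed $2^{|\supp(g)|}$ bound; moreover the step ``each minimal partition is determined by the trace on $\supp(g)$ of one of its blocks'' is never justified (distinct minimal partitions could have blocks with the same trace). Your fallback of quoting Neumann \cite[Lemma 2.2]{n} is not a proof either: you do not verify that that lemma asserts finiteness of the set of minimal congruences, and in this paper Neumann's lemma is cited only as the model for Lemma \ref{L:choice}; Claim \ref{C:min} itself is given a short self-contained argument. The observation that the block stabilizer acts primitively on a minimal block is correct but plays no role in any count you actually carry out.

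The idea that is missing — and which is the whole content of the paper's proof — is to intersect the minimal blocks through a fixed point and then use finitariness of a \emph{single} element of $H$. Fix $\omega_0$ and suppose $E_0, E_1, E_2, \dots$ are distinct minimal nontrivial $H$-invariant relations, with $\Delta_n$ the $E_n$-block containing $\omega_0$. For $n \neq m$, $E_n \cap E_m$ is an $H$-invariant relation contained in both, so by minimality and distinctness it is the equality relation; hence $\Delta_n \cap \Delta_m = \{\omega_0\}$. Choose $d_n \in \Delta_n \smallsetminus \{\omega_0\}$ (these points are pairwise distinct) and, by transitivity, some $\pi \in H$ with $\pi(\omega_0) = d_0$. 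For each $n \geq 1$ we have $d_0 \notin \Delta_n$, so $\pi(\Delta_n)$ is an $E_n$-block different from, hence disjoint from, $\Delta_n$; in particular $\pi(d_n) \neq d_n$. Thus $\pi$ moves the infinitely many points $d_n$, contradicting $\pi \in \Fin(\Omega)$. Your proposal gestures at conjugating a finitary element around, but never isolates this intersection argument, without which no finiteness conclusion is reached.
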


\begin{proof}[Proof of Claim \ref{C:min}]
Suppose that $\{\, E_{n} \mid n \in \mathbb{N} \,\}$ are 
distinct minimal nontrivial $H$-invariant equivalence relations
on $\Omega$. Fix some element $\omega_{0} \in \Omega$; and
for each $n \in \mathbb{N}$, let $\Delta_{n}$ be the 
$E_{n}$-block such that $\omega_{0} \in \Delta_{n}$. Notice that
if $n \neq m$, then $\Delta_{n} \cap \Delta_{m}$ is
a block; and hence by the minimality of $E_{n}$, $E_{m}$,
we must have that 
$\Delta_{n} \cap \Delta_{m} = \{\, \omega_{0} \,\}$.
For each $n \in \mathbb{N}$, choose an element
$d_{n} \in \Delta \smallsetminus \{\, \omega_{0} \,\}$.
Let $\pi \in H$ satisfy $\pi(\omega_{0}) = d_{0}$. 
If $n > 0$, then $d_{0} \in \pi(\Delta_{n}) \neq \Delta_{n}$ 
and so $\pi(d_{n}) \neq d_{n}$, which contradicts
the fact that $\pi \in \Fin(\mathbb{N})$.
\end{proof}

\begin{proof}[Proof of Lemma \ref{L:choice}]
Once again, fix some element $\omega_{0} \in \Omega$.
First suppose that there exists a maximal nontrivial proper
$H$-invariant equivalence relation $E$ on $\Omega$.
Then the set $\Omega/E$ of $E$-classes is infinite and $H$ acts as a primitive group of finitary permutations on
$\Omega/E$. Applying Wielandt's Theorem, it
follows that $H$ induces at least $\Alt(\Omega/E)$
on $\Omega/E$. Suppose that $E^{\prime} \neq E$ is a 
second maximal proper $H$-invariant equivalence 
relation on $\Omega$. Let $\Delta$ be the $E$-block
such that $\omega_{0} \in \Delta$ and let $\Delta^{\prime}$ 
be the $E^{\prime}$-block such that 
$\omega_{0} \in \Delta^{\prime}$. Then there exists 
$d \in \Delta^{\prime} \smallsetminus \Delta$. Since $H$ 
acts 2-transitively on $\Omega/E$, it follows that
the orbit $H_{\{\Delta\}} \cdot d$ is infinite;
and since 
$[\, H_{\{\Delta\}}: H_{(\Delta)}\,] < \infty$,
it follows that the orbit $H_{(\Delta)} \cdot d$ is 
also infinite. But this means that $\Delta^{\prime}$
is infinite, which is a contradiction.

Next suppose that there does not exist a maximal proper
$H$-invariant equivalence relation on $\Omega$. Then there
exists a strictly increasing sequence
\[ 
E_{0} \subset E_{1} \subset \cdots \subset E_{n} \subset \cdots
\]
of proper $H$-invariant equivalence relations
on $\Omega$. Let $E = \bigcup_{n \in \mathbb{N}}E_{n}$.
Then $E$ is an $H$-invariant equivalence relation such that
every $E$-class is infinite and it follows that 
$E = \Omega^{2}$. For each $n \in \mathbb{N}$, let
$\Delta_{n}$ be the $E_{n}$-block such that 
$\omega_{0} \in \Delta_{n}$. Then clearly 
$\bigcup_{n \in \mathbb{N}} \Delta_{n} = \Omega$.

Applying Claim \ref{C:min}, let $R_{1}, \cdots, R_{m}$
be the finitely many minimal nontrivial 
$H$-invariant equivalence relations on $\Omega$;
and for each $1 \leq \ell \leq m$, let $\Phi_{\ell}$
be the $R_{\ell}$-block such that $\omega_{0} \in \Phi_{\ell}$.
Then there exists $n \in \mathbb{N}$ such that
$\Phi_{\ell} \subseteq \Delta_{n}$ for all
$1 \leq \ell \leq m$; and it follows that 
$R_{\ell} \subseteq E_{n}$ for all $1 \leq \ell \leq m$.
Finally, letting $E^{H}_{\mn}$ be the intersection of all
the proper $H$-invariant equivalence relations $E$
such that $R_{\ell} \subseteq E$ for all $1 \leq \ell \leq m$,
it is clear that $E^{H}_{\mn}$ satisfies condition
\ref{L:choice}(ii).  
\end{proof}

Finally, before beginning the proof of Theorem \ref{T:irs},
we will present a brief discussion of 
Kingman's Theorem \cite{k}. Let 
\[
\ER_{\mathbb{N}} = 
\{\, E \in 2^{\mathbb{N} \times \mathbb{N}} \mid
E \text{ is an equivalence relation on } \mathbb{N} \,\}.
\]
Then $\ER_{\mathbb{N}}$ is a compact space and
$\Fin(\mathbb{N}) \curvearrowright \ER_{\mathbb{N}}$
via the shift action
\[ 
(\, g \cdot E\,)(n,m) = E(g^{-1}(n),g^{-1}(m)).
\]
As expected, a $\Fin(\mathbb{N})$-invariant Borel
probability measure $m$ on $\ER_{\mathbb{N}}$ is
called an {\em invariant random equivalence relation\/}.
For example, let 
$\alpha = (\, \alpha_{i}\,)_{i \in \mathbb{N}^{+}} 
\in D[\,0,1\,]$ be such that 
$\sum_{i=1}^{\infty} \alpha_{i} \leq 1$ and
let $\alpha_{0} = 1 - \sum_{i=1}^{\infty} \alpha_{i}$. 
Let $\mu_{\alpha}$ be the corresponding product
measure on $\mathbb{N}^{\mathbb{N}}$, as defined
in Section \ref{S:intro}; and for each $\xi \in \mathbb{N}^{\mathbb{N}}$ and $i \in \mathbb{N}$, let
$B^{\xi}_{i} = \{\, n \in \mathbb{N} \mid \xi(n) = i \,\}$. 
Then we can define a $\Fin(\mathbb{N})$-equivariant
map $\xi \overset{\varphi_{\alpha}}{\mapsto} E_{\xi}$
from $\mathbb{N}^{\mathbb{N}}$ to $\ER_{\mathbb{N}}$ 
by letting $E_{\xi}$ correspond to the partition
\[
\mathbb{N} = \bigsqcup_{n \in B^{\,\xi}_{0}}\{\,n\,\} \sqcup \bigsqcup_{i > 0} B^{\,\xi}_{i};
\]  
and it follows that 
$m_{\alpha} = (\varphi_{\alpha})_{*}\mu_{\alpha}$
is an ergodic random invariant equivalence relation.
The following theorem is due to Kingman \cite{k}.

\begin{Thm} \label{T:kingman}
If $m$ is an ergodic random invariant equivalence relation,
then there exists $\alpha$ as above such that
$m = m_{\alpha}$.
\end{Thm}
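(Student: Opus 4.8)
The plan is to reduce the statement to the classical Kingman representation theorem for exchangeable random partitions of $\mathbb{N}$, and then to verify that the exchangeable partitions arising this way are exactly the $m_{\alpha}$. First I would observe that an invariant random equivalence relation $m$ on $\mathbb{N}$ is precisely an \emph{exchangeable} random partition of $\mathbb{N}$: the shift action of $\Fin(\mathbb{N})$ is by all finitary permutations, so $m$-invariance says the law of the random partition is unchanged under any permutation moving finitely many points, which (since partitions are determined by their restrictions to finite initial segments and any permutation of a finite set extends to a finitary permutation of $\mathbb{N}$) is the usual notion of exchangeability. Ergodicity of $m$ under $\Fin(\mathbb{N})$ corresponds to the partition being \emph{extreme} in the simplex of exchangeable random partitions, because the tail $\sigma$-algebra coincides (up to null sets) with the $\Fin(\mathbb{N})$-invariant $\sigma$-algebra via a Hewitt--Savage type argument.

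Next I would invoke Kingman's representation theorem in its classical form: every extreme exchangeable random partition of $\mathbb{N}$ is obtained by the ``paintbox'' construction from a single nonincreasing sequence $\alpha = (\alpha_{i})_{i\in\mathbb{N}^{+}} \in D[\,0,1\,]$ with $\sum_i \alpha_i \le 1$; namely, assign to each $n\in\mathbb{N}$ an independent label $\xi(n)\in\mathbb{N}$ with $\mathbb{P}[\xi(n)=i]=\alpha_i$ for $i\ge 1$ and $\mathbb{P}[\xi(n)=0]=\alpha_0 = 1-\sum_i\alpha_i$, and then declare $n\sim m$ iff $\xi(n)=\xi(m)\ge 1$ (points with label $0$ forming singleton ``dust'' classes). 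I would then simply recognize that this paintbox law is exactly $m_{\alpha}=(\varphi_\alpha)_*\mu_\alpha$ as defined just above the theorem statement: $\mu_\alpha$ is the product measure giving $\xi$ its i.i.d.\ coordinates, and $\varphi_\alpha$ sends $\xi$ to the partition with blocks $B^\xi_i$ for $i>0$ together with singletons $\{n\}$ for $n\in B^\xi_0$. Finally, I would note that distinct $\alpha\ne\alpha'$ give distinct $m_\alpha\ne m_{\alpha'}$, since by the strong law of large numbers the asymptotic block densities recover $\alpha$ almost surely, so the correspondence $\alpha\mapsto m_\alpha$ is a bijection between $\{\alpha\in D[\,0,1\,] : \sum_i\alpha_i\le 1\}$ and the extreme points.

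The one step that genuinely requires care — and which I expect to be the main obstacle if one wants a self-contained treatment rather than a citation — is the identification of \emph{ergodicity} (extremality in the space of $\Fin(\mathbb{N})$-invariant measures) with \emph{extremality} in the Kingman simplex, i.e.\ the Hewitt--Savage step. One must show that for an invariant random equivalence relation the $\Fin(\mathbb{N})$-invariant Borel sets agree modulo $m$-null sets with the exchangeable $\sigma$-algebra, and then apply de Finetti/Hewitt--Savage to conclude that an ergodic $m$ is a Dirac mass on the space of ``directing measures'', which by Kingman's theorem is parametrized by $D[\,0,1\,]$. Everything else — exchangeability $\Leftrightarrow$ invariance, and the explicit matching of the paintbox with $m_\alpha$ — is routine bookkeeping. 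In the write-up I would state the classical Kingman theorem as a black box (it is attributed to \cite{k} in the excerpt) and spend the remaining effort only on translating it into the $\ER_{\mathbb{N}}$/$\mu_\alpha$ language used here.
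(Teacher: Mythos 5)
Your proposal is correct, and it matches the paper's treatment: the paper offers no proof of Theorem \ref{T:kingman}, simply attributing it to Kingman \cite{k}, and your argument amounts to the same citation together with the (routine) translation between $\Fin(\mathbb{N})$-invariant ergodic measures on $\ER_{\mathbb{N}}$ and extreme exchangeable random partitions, plus the identification of the paintbox law with $m_{\alpha}$.
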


\begin{Remark} \label{R:eqn}
If $\alpha = (\, \alpha_{i}\,)_{i \in \mathbb{N}^{+}}$ 
is such that there exist $i \in \mathbb{N}^{+}$ 
with $\alpha_{i} = \alpha_{i+1} > 0$, then the map
$\xi \mapsto E_{\xi}$ is not injective, and 
there does not exist a $\Fin(\mathbb{N})$-equivariant
Borel map $E \mapsto \xi_{E}$ from $\ER_{\mathbb{N}}$ 
to $\mathbb{N}^{\mathbb{N}}$ such that $E = E_{\xi_{E}}$. 
\end{Remark}

The proof of Theorem \ref{T:irs} will also make use
of the following easy observation.

\begin{Lem} \label{L:finite}
If $m$ is an ergodic random invariant equivalence relation,
then $m$ concentrates the equivalence relations
$E \in \ER_{\mathbb{N}}$ such that every $E$-class is
either infinite or a singleton.
\end{Lem}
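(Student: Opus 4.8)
The plan is to deduce Lemma~\ref{L:finite} from Kingman's Theorem~\ref{T:kingman} together with the observed structure of the measures $m_{\alpha}$. By Theorem~\ref{T:kingman}, every ergodic random invariant equivalence relation $m$ equals $m_{\alpha} = (\varphi_{\alpha})_{*}\mu_{\alpha}$ for some $\alpha \in D[\,0,1\,]$ with $\sum_{i=1}^{\infty}\alpha_{i} \leq 1$. So it suffices to show that for $\mu_{\alpha}$-a.e.\ $\xi \in \mathbb{N}^{\mathbb{N}}$, every $E_{\xi}$-class is infinite or a singleton. By the definition of $\varphi_{\alpha}$, the $E_{\xi}$-classes are precisely the singletons $\{n\}$ with $n \in B^{\xi}_{0}$, together with the blocks $B^{\xi}_{i}$ for $i > 0$. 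Thus what we must rule out is the event that some $B^{\xi}_{i}$ with $i > 0$ is nonempty but finite.

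First I would recall the dichotomy stated in Section~\ref{S:intro}: for $\mu_{\alpha}$-a.e.\ $\xi \in \mathbb{N}^{\mathbb{N}}$, for each $i \in \mathbb{N}$ the conditions (a) $\alpha_{i} > 0$, (b) $B^{\xi}_{i} \neq \emptyset$, and (c) $B^{\xi}_{i}$ is infinite are equivalent. (This is an immediate consequence of the Borel--Cantelli lemmas applied to the independent events $\{\xi(n) = i\}$: if $\alpha_{i} > 0$ then $\sum_{n}\alpha_{i} = \infty$ forces $\xi(n) = i$ infinitely often, and if $\alpha_{i} = 0$ then $B^{\xi}_{i} = \emptyset$ almost surely; since $\mathbb{N}$ is countable, a single full-measure set works for all $i$ simultaneously.) In particular, for $\mu_{\alpha}$-a.e.\ $\xi$, every block $B^{\xi}_{i}$ with $i > 0$ is either empty (when $\alpha_{i} = 0$) or infinite (when $\alpha_{i} > 0$), so the non-singleton $E_{\xi}$-classes are exactly the infinite blocks $B^{\xi}_{i}$, $i \in I$.

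Finally I would assemble these observations: fix the $\mu_{\alpha}$-conull set $X \subseteq \mathbb{N}^{\mathbb{N}}$ on which the above dichotomy holds, and let $\mathcal{E} = \{E \in \ER_{\mathbb{N}} \mid \text{every } E\text{-class is infinite or a singleton}\}$. For $\xi \in X$ the equivalence relation $E_{\xi}$ lies in $\mathcal{E}$, so $\varphi_{\alpha}(X) \subseteq \mathcal{E}$, whence $m_{\alpha}(\mathcal{E}) \geq \mu_{\alpha}(X) = 1$. (One should note in passing that $\mathcal{E}$ is Borel in $\ER_{\mathbb{N}}$ — indeed it is $G_{\delta}$, since ``the class of $n$ is infinite or a singleton'' can be written as a countable intersection over finite thresholds of a clopen condition — so that $m_{\alpha}(\mathcal{E})$ makes sense; alternatively one can simply observe $\varphi_{\alpha}^{-1}(\mathcal{E}) \supseteq X$ directly.) This proves the lemma. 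There is no real obstacle here; the only point requiring a word of care is the measurability of $\mathcal{E}$, and that is routine. The content of the lemma is entirely carried by Kingman's classification, which identifies all the ergodic random equivalence relations as the explicitly described $m_{\alpha}$'s, for which the conclusion is transparent from the product structure of $\mu_{\alpha}$.
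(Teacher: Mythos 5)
Your proof is correct, and it is in fact the route that the paper explicitly declines to take: the paper's own proof begins by remarking that the lemma ``is an immediate consequence of Theorem~\ref{T:kingman}'' but then gives an elementary argument instead. Your version makes that remark precise: invoke Kingman to write $m = m_{\alpha} = (\varphi_{\alpha})_{*}\mu_{\alpha}$, and then use the Borel--Cantelli dichotomy for the product measure $\mu_{\alpha}$ (already recorded in Section~\ref{S:intro}) to see that $\mu_{\alpha}$-a.e.\ block $B^{\xi}_{i}$, $i>0$, is empty or infinite, so $E_{\xi}$ has only infinite classes and singletons; the measurability of the target set is routine, as you note. There is no circularity, since Kingman's theorem is an external cited result whose proof does not pass through this lemma. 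By contrast, the paper argues directly: if $m$ were a counterexample, ergodicity would force a fixed $k>1$ with $m(\{E \mid \text{some $E$-class has size } k\})=1$; transitivity of $\Fin(\mathbb{N})$ on $k$-element subsets gives a single value $r>0$ for the probability that a given $S \in [\,\mathbb{N}\,]^{k}$ is an $E$-class, and the infinitely many pairwise exclusive events $C_{S}$ with $0 \in S$ then have total mass $> 1$, a contradiction. What each approach buys: yours is shorter and transparent but consumes the full strength of Kingman's classification as a black box, while the paper's is self-contained, uses only ergodicity plus transitivity on finite subsets, and so keeps the lemma independent of the classification machinery it accompanies.
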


\begin{proof}
While this result is an immediate consequence of 
Theorem \ref{T:kingman}, it seems worthwhile
to give an elementary proof. So
suppose that $m$ is a counterexample. Then, by ergodicity,
there exists a fixed integer $k > 1$ such that
\[
m(\, \{\, E \in \ER_{\mathbb{N}} \mid \text{ There exists
an $E$-class of size } k \,\}) = 1.
\]
For each $S \in [\, \mathbb{N}\,]^{k}$, let $C_{S}$
be the event that $S$ is an $E$-class. Since
$\Fin(\mathbb{N})$ acts transitively on
$[\, \mathbb{N}\,]^{k}$, there exists 
a fixed real $r > 0$ such that $m(C_{S}) = r$ 
for all $S \in [\, \mathbb{N}\,]^{k}$.
But, since the events 
$\{\, C_{S} \mid 0 \in S \in [\, \mathbb{N}\,]^{k}\,\}$
are mutually exclusive, this is impossible.
\end{proof}

We are now ready to begin the proof of Theorem \ref{T:irs}.
So suppose that $\nu$ is an ergodic IRS of $\Fin(\mathbb{N})$.
Clearly we can suppose that $\nu \neq \delta_{1}$.

\begin{Lem} \label{L:alt}
For $\nu$-a.e.\ $H \in \Sub_{\Fin(\mathbb{N})}$,
if $\Omega \subseteq \mathbb{N}$ is a nontrivial
$H$-orbit, then $\Omega$ is infinite and $H$
induces at least $\Alt(\Omega)$ on $\Omega$.
\end{Lem}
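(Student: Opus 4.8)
The plan is to reduce everything to two tools: the classification of invariant random equivalence relations (Kingman's Theorem \ref{T:kingman}, together with Lemma \ref{L:finite}), and Wielandt's Theorem on primitive finitary permutation groups. First I would push the IRS $\nu$ forward along the orbit-equivalence-relation map: to each $H \in \Sub_{\Fin(\mathbb{N})}$ associate the equivalence relation $E_{H}$ on $\mathbb{N}$ whose classes are the $H$-orbits. This map is Borel and $\Fin(\mathbb{N})$-equivariant, so $m = (H \mapsto E_{H})_{*}\nu$ is an invariant random equivalence relation. Since $\nu$ is ergodic, $m$ is ergodic, and hence by Lemma \ref{L:finite} we may assume $m$ concentrates on equivalence relations all of whose classes are either infinite or singletons. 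Translating back, for $\nu$-a.e.\ $H$ every $H$-orbit is either infinite or a fixed point; in particular every nontrivial $H$-orbit $\Omega$ is infinite. (Alternatively one can prove this finiteness directly: if with positive probability $H$ had an orbit of some fixed finite size $k > 1$, then by the transitivity of $\Fin(\mathbb{N})$ on $[\mathbb{N}]^{k}$ every $k$-subset of $\mathbb{N}$ would be an $H$-orbit with the same positive probability, and the events ``$S$ is an $H$-orbit'' for the $k$-sets $S$ containing $0$ are mutually exclusive — the same argument as in Lemma \ref{L:finite}.)

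The second, and main, step is to show that on each infinite orbit $\Omega$ the induced group $H^{\Omega} \leqslant \Fin(\Omega)$ contains $\Alt(\Omega)$. The strategy is to rule out the imprimitive case. So suppose that with positive $\nu$-probability $H$ has an infinite orbit $\Omega$ on which $H^{\Omega}$ acts imprimitively. By Lemma \ref{L:choice} and Definition \ref{D:can}, $H^{\Omega}$ then carries a canonically defined nontrivial proper invariant equivalence relation $E^{\Omega}_{\can}$ with finite blocks. The point is that this construction is canonical, hence $\Fin(\mathbb{N})$-equivariant: refining each $H$-orbit by its canonical equivalence relation (and leaving fixed points as singletons, and leaving primitive infinite orbits unrefined — we will need to argue the primitive case is also handled, see below) produces a Borel, equivariant assignment $H \mapsto \widetilde{E}_{H}$ of an equivalence relation on $\mathbb{N}$, strictly finer than $E_{H}$ on the imprimitive orbits. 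Pushing $\nu$ forward along this map gives a new invariant random equivalence relation $\widetilde{m}$, again ergodic, which by Lemma \ref{L:finite} must concentrate on equivalence relations with only infinite or singleton classes. But on the positive-measure set where some orbit was imprimitive, the canonical refinement has a finite block of size $> 1$, a contradiction. Hence for $\nu$-a.e.\ $H$, $H^{\Omega}$ is primitive on every infinite orbit $\Omega$, and Wielandt's Theorem \cite[Satz 9.4]{w} forces $H^{\Omega} \in \{\Alt(\Omega), \Fin(\Omega)\}$; in either case $H$ induces at least $\Alt(\Omega)$ on $\Omega$.

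I expect the main obstacle to be the bookkeeping in the second step: making precise that ``refine every orbit by its canonical equivalence relation'' really is a single Borel $\Fin(\mathbb{N})$-equivariant map $\Sub_{\Fin(\mathbb{N})} \to \ER_{\mathbb{N}}$. One must check (a) that ``$H$ acts imprimitively on the orbit of $n$'' is a Borel condition in $(H,n)$, (b) that on such orbits $E^{\Omega}_{\can}$ depends Borel-measurably on $H$ — which follows from the explicit first-order descriptions of $E^{\Omega}_{\mx}$ and $E^{\Omega}_{\mn}$ in Lemma \ref{L:choice} — and (c) that the whole assignment commutes with the $\Fin(\mathbb{N})$-shift, which is immediate from the canonicity (uniqueness) built into Definition \ref{D:can}. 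Orbits on which $H$ acts primitively, and singleton orbits, are simply left unrefined; since we only need the new relation to be \emph{strictly} finer on a positive-measure set of imprimitive configurations, this suffices to invoke Lemma \ref{L:finite} for the contradiction. Once this measurability is in hand, everything else is a short application of the two cited theorems.
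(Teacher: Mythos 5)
Your proposal is correct and follows essentially the same route as the paper: both rule out finite nontrivial orbits and imprimitive infinite orbits by mapping $H$ equivariantly to an equivalence relation with finite nontrivial classes (using $E^{\Omega}_{\can}$ from Lemma \ref{L:choice}), pushing $\nu$ forward to an ergodic invariant random equivalence relation, and contradicting Lemma \ref{L:finite}, with Wielandt's Theorem reducing the primitive case to $\Alt(\Omega)$ or $\Fin(\Omega)$. The only difference is cosmetic: the paper handles both failure modes in a single pushforward $H \mapsto R_{H}$, whereas you split the argument into two pushforwards and add some (harmless, and in fact welcome) measurability bookkeeping.
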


\begin{proof}
Suppose not. Recall that, by Wielandt's Theorem \cite{w}, if $\Omega$ is an infinite set, then $\Alt(\Omega)$ and $\Fin(\Omega)$ are the only primitive subgroups of $\Fin(\Omega)$. It follows that for 
$\nu$-a.e.\ $H \in \Sub_{\Fin(\mathbb{N})}$, either
there exists a nontrivial finite $H$-orbit, or else
there exists an infinite $H$-orbit on which $H$ acts
imprimitively. For each such $H \in \Sub_{\Fin(\mathbb{N})}$, 
let $R_{H}$ be the equivalence relation on $\mathbb{N}$
such that if $n \neq m \in \mathbb{N}$, then 
$n \mathbin{R_{H}} m$ if and only if $n$, $m$ lie in the same
$H$-orbit $\Omega$ and either:
\begin{enumerate}
\item[(i)] $\Omega$ is a nontrivial finite $H$-orbit; or
\item[(ii)] $\Omega$ is an infinite imprimitive
$H$-orbit and $n \mathbin{E^{\Omega}_{\can}} m$.
\end{enumerate}
Otherwise, let $R_{H}$ be the trivial equivalence
relation on $\mathbb{N}$. Then clearly the map
$H \overset{\psi}{\mapsto} R_{H}$ is
$\Fin(\mathbb{N})$-equivariant, and 
hence $m = \psi_{*}\nu$ is an ergodic
invariant random equivalence relation.
But $m$ concentrates on the 
$E \in \ER_{\mathbb{N}}$ with a nontrivial
finite $E$-class, which contradicts Lemma \ref{L:finite}.
\end{proof}

The proof of the following lemma will make use of the
notion of a diagonal subgroup, which is defined
as follows. Suppose that $r \geq 2$ and that
$\Omega_{1}, \cdots , \Omega_{r}$ are countably infinite
sets. Then
$D \leqslant \bigoplus_{1 \leq \ell \leq r} \Alt(\Omega_{\ell})$ 
is said to be a {\em diagonal subgroup\/}
if there exist isomorphisms 
$\pi_{\ell}: \Alt(\Omega_{1}) \to \Alt(\Omega_{\ell})$ for 
$2 \leq \ell \leq r$ such that 
\[
D = \{\, (g, \pi_{2}(g), \cdots, \pi_{r}(g)) 
\mid g \in \Alt(\Omega_{1}) \,\}.
\]
Recall that every automorphism
of $\Alt(\mathbb{N})$ is the restriction of an
inner automorphism of the group 
$\Sym(\mathbb{N})$ of all permutations
of $\mathbb{N}$. It follows that for each
$2 \leq \ell \leq r$, there exists a unique
bijection $T_{\ell}: \Omega_{1} \to \Omega_{\ell}$
such that $\pi_{\ell}(g) = T_{\ell}gT_{\ell}^{-1}$.
Let $T_{1}$ be the identity map on $\Omega_{1}$;
and for each $1 \leq k,\ell \leq r$, let
$T_{k,\ell} = T^{-1}_{\ell}T_{k}$. Then we will write
$D = \Diag(\, \bigoplus_{1 \leq \ell \leq r} 
\Alt(\Omega_{\ell})\,)$,
and say that $D$ is the diagonal subgroup determined
by the bijections 
$\{\, T_{k,\ell} \mid 1 \leq k,\ell \leq r \,\}$.
Finally, in the degenerate case when $r = 1$, we will take 
$\Alt(\Omega_{1})$ to be the only diagonal subgroup of 
$\Alt(\Omega_{1})$ and we will take $T_{1,1}$ to be the
identity map on $\Omega_{1}$.

\begin{Lem} \label{L:alt2}
For $\nu$-a.e.\ $H \in \Sub_{\Fin(\mathbb{N})}$,
if $\{\, \Omega_{i} \mid i \in I \,\}$ is the set
of nontrivial $H$-orbits, then each $\Omega_{i}$
is infinite and 
$\bigoplus_{i \in I}\Alt(\Omega_{i}) \leqslant H$.
\end{Lem}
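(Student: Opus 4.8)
The plan is to combine Lemma~\ref{L:alt} with the classical structure theory of subdirect products of (nonabelian) simple groups, and to eliminate the ``diagonal'' configurations by pushing them forward to an impossible invariant random equivalence relation via Lemma~\ref{L:finite}.

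By Lemma~\ref{L:alt} we may restrict attention to those $H$ for which every nontrivial orbit $\Omega_{i}$, $i\in I$, is infinite and $H$ induces at least $\Alt(\Omega_{i})$ on $\Omega_{i}$. Fix such an $H$, let $N=[\,H,H\,]$, and for each $i\in I$ let $\pi_{i}\colon H\to\Fin(\Omega_{i})$ be the restriction homomorphism. Since $H\leqslant\bigoplus_{i\in I}\Fin(\Omega_{i})$ and $\bigoplus_{i}\Fin(\Omega_{i})/\bigoplus_{i}\Alt(\Omega_{i})$ is abelian, $N\leqslant\bigoplus_{i\in I}\Alt(\Omega_{i})$; and since the finitary alternating group on an infinite set is simple, hence perfect, $\pi_{i}(N)=[\,\pi_{i}(H),\pi_{i}(H)\,]=\Alt(\Omega_{i})$ for every $i$. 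Thus $N$ is a subdirect product of the simple groups $\{\,\Alt(\Omega_{i})\mid i\in I\,\}$.

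By the structure theory of such subdirect products there is a partition of $I$ into \emph{bonds} $J$ with $N=\bigoplus_{J}\Delta_{J}$, where each $\Delta_{J}\leqslant\bigoplus_{i\in J}\Alt(\Omega_{i})$ is a diagonal subgroup $\Diag(\,\bigoplus_{i\in J}\Alt(\Omega_{i})\,)$; since every nontrivial element of $\Delta_{J}$ has nontrivial projection to each $\Alt(\Omega_{i})$, $i\in J$, while $N$ lies in a restricted product, each bond $J$ is finite. Using perfectness once more one checks that $\Alt(\Omega_{i})\leqslant H$ if and only if the bond containing $i$ is a singleton; equivalently, $\bigoplus_{i\in I}\Alt(\Omega_{i})\leqslant H$ if and only if every bond of $N$ is a singleton. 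Now let $X\subseteq\Sub_{\Fin(\mathbb{N})}$ be the (conjugation-invariant, Borel) set of $H$ with $\bigoplus_{i\in I}\Alt(\Omega_{i})\not\leqslant H$, i.e.\ such that $N=[\,H,H\,]$ has a bond of size $\geq 2$; by ergodicity $\nu(X)\in\{0,1\}$. Suppose $\nu(X)=1$. For $H\in X$ and each bond $J$, the fact that every isomorphism between finitary alternating groups on infinite sets is realized by a unique bijection of the underlying sets extracts from $\Delta_{J}$ cocycle-compatible bijections $T^{H}_{i,j}\colon\Omega_{i}\to\Omega_{j}$ for $i,j\in J$; define $R_{H}\in\ER_{\mathbb{N}}$ by declaring, for distinct $n,p\in\mathbb{N}$, that $n\mathbin{R_{H}}p$ iff $n\in\Omega_{i}$ and $p\in\Omega_{j}$ with $i,j$ in a common bond $J$ and $p=T^{H}_{i,j}(n)$. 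Cocycle-compatibility makes $R_{H}$ an equivalence relation each of whose nontrivial classes has size equal to the corresponding bond size, and since $H\in X$ at least one class has size $\geq 2$; for $H\notin X$ let $R_{H}$ be the equality relation. Then $\psi\colon H\mapsto R_{H}$ is a $\Fin(\mathbb{N})$-equivariant Borel map, so $\psi_{*}\nu$ is an ergodic invariant random equivalence relation concentrated on relations possessing a nontrivial finite class, contradicting Lemma~\ref{L:finite}. Hence $\nu(X)=0$: for $\nu$-a.e.\ $H$ every bond of $[\,H,H\,]$ is a singleton, so $\bigoplus_{i\in I}\Alt(\Omega_{i})\leqslant H$; together with Lemma~\ref{L:alt} this is the assertion of the lemma.

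The main obstacle is the bookkeeping underlying the previous paragraph: invoking the structure theorem for subdirect products of simple groups correctly — in particular that all-singleton bonds force $N=\bigoplus_{i}\Alt(\Omega_{i})$ and that every bond is finite because $N$ sits in a restricted product — extracting from each $\Delta_{J}$ the canonical bijections $T^{H}_{i,j}$ and verifying their cocycle compatibility so that $R_{H}$ genuinely is an equivalence relation, and checking that $H\mapsto R_{H}$ is Borel and $\Fin(\mathbb{N})$-equivariant. The supporting facts used along the way — that finitary alternating groups on infinite sets are simple and perfect, and that isomorphisms between them are induced by unique bijections — are standard, the last being recalled in the discussion of diagonal subgroups preceding the lemma.
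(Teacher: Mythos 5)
Your proof is correct and takes essentially the same route as the paper: decompose a canonical subdirect subgroup of $\bigoplus_{i \in I}\Alt(\Omega_{i})$ (you use $[\,H,H\,]$, the paper uses $K = H \cap \bigoplus_{i \in I}\Alt(\Omega_{i})$) into diagonal subgroups over finite bonds, turn the bonds into a $\Fin(\mathbb{N})$-equivariant invariant random equivalence relation whose nontrivial classes are finite, and contradict Lemma \ref{L:finite}. The remaining differences --- working with the derived subgroup and running an explicit $0$--$1$ argument on the bad set $X$ rather than concluding almost everywhere directly --- are cosmetic, and your appeal to the subdirect-product structure theory is at the same level of detail as the paper's sketched Claim \ref{C:product}.
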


\begin{proof}
By Lemma \ref{L:alt}, for $\nu$-a.e.\ 
$H \in \Sub_{\Fin(\mathbb{N})}$,
if $\Omega \subseteq \mathbb{N}$ is a nontrivial
$H$-orbit, then $\Omega$ is infinite and $H$
induces at least $\Alt(\Omega)$ on $\Omega$.
Let $H$ be such a subgroup, let 
$\{\, \Omega_{i} \mid i \in I \,\}$ is the set
of nontrivial $H$-orbits, and let
$K = H \cap \bigoplus_{i \in I}\Alt(\Omega_{i})$.

\begin{Claim} \label{C:product}
There exists a partition 
$\{\, F_{j} \mid j \in J \,\}$ of
$I$ into finite subsets such that
\[
K = \bigoplus_{j \in J} 
\Diag( \bigoplus_{k \in F_{j}} \Alt(\Omega_{k})), 
\]
where the diagonal subgroups are determined by
unique bijections 
$T_{k,\ell}: \Omega_{k} \to \Omega_{\ell}$ for 
$k$, $\ell \in F_{j}$.  
\end{Claim}

\begin{proof}[Sketch proof of Claim \ref{C:product}]
For each $g \in K$, let $g = \prod_{i \in I}g_{i}$, where 
$g_{i} \in \Alt(\Omega_{i})$, and let 
$s(g) = \{\, i \mid g_{i} \neq 1 \,\}$. Then clearly each $s(g)$ is a finite subset of $I$. Let 
$\mathcal{P} = \{\, F_{j} \mid j \in J \,\}$ be the collection of minimal subsets $A \subseteq I$ such that there exists 
$1 \neq g \in K$ with $s(g) = A$. 
Then, using the simplicity
of the infinite alternating group and the fact
that $K$ projects onto each $\Alt(\Omega_{i})$, it is easily checked that $\mathcal{P}$ is a partition of $I$ and that
\[
K = \bigoplus_{j \in J} 
\Diag( \bigoplus_{k \in F_{j}} \Alt(\Omega_{k}))
\] 
for some collection of bijections 
\[ 
\{\, T_{k,\ell}: \Omega_{k} \to \Omega_{\ell}
\mid k, \ell \in F_{j} \text{ for some } j \in J \,\}. 
\]
\end{proof}
Let $R_{H}$ be the equivalence relation on $\mathbb{N}$
such that if $n \neq m \in \mathbb{N}$, then 
$n \mathbin{R_{H}} m$ if and only if there exists $j \in J$
and $k \neq \ell \in F_{j}$ such that
$T_{k,\ell}(n) = m$. Clearly the map
$H \overset{\psi}{\mapsto} R_{H}$ is
$\Fin(\mathbb{N})$-equivariant, and hence $m = \psi_{*}\nu$ is an ergodic invariant random equivalence relation. 
Applying Lemma \ref{L:finite}, since every $R_{H}$-class
is finite, it follows that each $|F_{j}| = 1$.
\end{proof}

Let $H \overset{p}{\mapsto} E_{H}$ be the 
$\Fin(\mathbb{N})$-equivariant map from 
$\Sub_{\Fin(\mathbb{N}}$ to $\ER_{\mathbb{N}}$ 
such that $E_{H}$ is the $H$-orbit equivalence relation.
Then $p_{*}\nu$ is an ergodic invariant random
equivalence relation; and hence, applying 
Theorem \ref{T:kingman}, it follows that
there exists an
$\alpha = (\, \alpha_{i}\,)_{i \in \mathbb{N}^{+}} 
\in D[\,0,1\,]$ such that $p_{*}\nu = m_{\alpha}$.
Since $\nu \neq \delta_{1}$, it follows that 
$\alpha_{0} \neq 1$. Let 
$I = \{\, i \in \mathbb{N}^{+} \mid \alpha_{i} > 0 \,\}$.
Then for $\nu$-a.e.\ $H \in \Sub_{\Fin(\mathbb{N})}$,
there exists a $\mu_{\alpha}$-generic 
$\xi_{H} \in \mathbb{N}^{\mathbb{N}}$ such that
the $H$-orbit decomposition is given by
\addtocounter{equation}{9}
\begin{equation} \label{E:orbit}
\mathbb{N} = \bigsqcup_{n \in B^{\,\xi_{H}}_{0}}\{\,n\,\} \sqcup \bigsqcup_{i \in I} B^{\,\xi_{H}}_{i}.
\end{equation}
As we mentioned in Remark \ref{R:eqn}, if 
$\alpha = (\, \alpha_{i}\,)_{i \in \mathbb{N}^{+}}$ 
is such that there exist $i \in \mathbb{N}^{+}$ 
with $\alpha_{i} = \alpha_{i+1} > 0$, then the map
$\xi \mapsto E_{\xi}$ is not injective. In more detail, let
$\equiv$ be the equivalence relation on $I$ defined by
\[
k \equiv \ell \quad \Longleftrightarrow \quad 
\alpha_{k} = \alpha_{\ell};
\]
and let $I = \bigsqcup_{j \in J}I_{j}$ be the decomposition
of $I$ into $\equiv$-classes. Then clearly each $I_{j}$ is
finite. Let $P = \prod_{j \in J}\Sym(I_{j})$ be the full
direct product of the finite groups $\Sym(I_{j})$, and
let $P \curvearrowright 
(\, \mathbb{N}^{\mathbb{N}}, \mu_{\alpha} \,)$ 
be the measure-preserving action defined by
\[
(\, (\pi_{j})_{j \in J} \cdot \xi \,)(n) =
\begin{cases}
\pi_{j}(\xi(n)),           &\text{if $\xi(n) \in I_{j}$;} \\
\xi(n),           &\text{if $\xi(n) \in \mathbb{N} \smallsetminus I$.}
\end{cases}
\]
Then $P$ is a (possibly trivial) compact group; and if 
$\xi$, $\xi^{\prime} \in \mathbb{N}^{\mathbb{N}}$
are $\mu_{\alpha}$-generic, then 
$E_{\xi} = E_{\xi^{\prime}}$ if and only if there exists
$(\pi_{j})_{j \in J} \in P$ such that
$(\pi_{j})_{j \in J} \cdot \xi = \xi^{\prime}$.

\addtocounter{Thm}{1}
\begin{Def} \label{D:generic}
A subgroup $H \in \Sub_{\Fin(\mathbb{N})}$ is said to be
{\em $\nu$-generic\/} if:
\begin{enumerate} 
\item[(i)] there exists a $\mu_{\alpha}$-generic 
$\xi_{H} \in \mathbb{N}^{\mathbb{N}}$ such that the $H$-orbit
decomposition is given by (\ref{E:orbit}); and
\item[(ii)] $H$ satisfies the conclusion of Lemma \ref{L:alt2}. \end{enumerate}
\end{Def}

Let $H \in \Sub_{\Fin(\mathbb{N})}$ be $\nu$-generic and let
$\xi_{H} \in \mathbb{N}^{\mathbb{N}}$ be the  
$\mu_{\alpha}$-generic function chosen so that
if $\alpha_{i}, \alpha_{i+1}, \cdots, \alpha_{i+s}$
is a nontrivial $\equiv$-class, then the corresponding
orbits $B^{\,\xi_{H}}_{i}, B^{\,\xi_{H}}_{i+1}, \cdots,
B^{\,\xi_{H}}_{i+s}$ are listed in the order of their
least elements. Let
\begin{align*}
s_{H}: \bigoplus_{i \in I} \Fin( B^{\xi_{H}}_{i}) &\to S_{\alpha} = \bigoplus_{i \in I} C_{i} \\
                                (\, \pi_{i} \,) &\mapsto (\, \sgn(\pi_{i}) \,)
\end{align*}
and let $A_{H} = s_{H}(H) \leqslant S_{\alpha}$.
Once again, let $P = \prod_{j \in J}\Sym(I_{j})$. Then the
natural action
$P \curvearrowright S_{\alpha} = \bigoplus_{i \in I} C_{i}$
induces a corresponding action 
$P \curvearrowright \Sub_{S_{\alpha}}$. For each 
$A \leqslant S_{\alpha}$, let $[\,A\,]$ be the
corresponding $P$-orbit. Since $P$ is a compact group,
it follows that
$\Sub_{S_{\alpha}}/P = \{\, [\,A\,] \mid 
A \in \Sub_{S_{\alpha}}\,\}$ 
is a standard Borel space. Furthermore, the Borel map
$H \mapsto [\,A_{H}\,]$ is clearly 
$\Fin(\mathbb{N})$-invariant. Hence, by ergodicity, there
exists a fixed $A  \in \Sub_{S_{\alpha}}$
such that $[\,A_{H}\,] = [\, A \,]$ for $\nu$-a.e.\
$H \in \Sub_{\Fin(\mathbb{N})}$. Let 
$X^{A}_{\alpha} \subseteq \Sub_{\Fin(\mathbb{N})}$
be the set of $\nu$-generic $H$ such that 
$[\,A_{H}\,] = [\, A \,]$. Then both $\nu$ and
$\nu^{A}_{\alpha}$ concentrate on $X^{A}_{\alpha}$.
Hence, in order to complete the proof of 
Theorem \ref{T:irs}, it is enough to show that
the action $\Fin(\mathbb{N}) \curvearrowright X^{A}_{\alpha}$
is uniquely ergodic. As we will explain, this is a 
straightforward consequence of the Pointwise Ergodic Theorem.

For each pair $F_{0}$, $F_{1}$ of finite disjoint subsets
of $\Fin(\mathbb{N})$, let
\[
U_{F_{0},F_{1}} = \{\, H \in \Sub_{\Fin(\mathbb{N})} \mid
F_{0} \subseteq H \text{ and } F_{1} \cap H = \emptyset \,\}.
\]
Then the sets $U_{F_{0},F_{1}}$ form a clopen basis of the
space $\Sub_{\Fin(\mathbb{N})}$; and thus it is enough to
show that $\nu(\,U_{F_{0},F_{1}}\,) = 
\nu^{A}_{\alpha}(\,U_{F_{0},F_{1}}\,)$ for all such
$F_{0}$, $F_{1}$. Hence, by the Pointwise Ergodic Theorem,
it is enough to show that if 
$H$, $H^{\prime} \in X^{A}_{\alpha}$, then 
\[
\lim_{n \to \infty} \frac{1}{|S_{n}|}
|\{\, g \in S_{n} \mid gHg^{-1} \in U_{F_{0},F_{1}} \,\}|
= \lim_{n \to \infty} \frac{1}{|S_{n}|}
|\{\, g \in S_{n} \mid 
gH^{\prime}g^{-1} \in U_{F_{0},F_{1}} \,\}|.
\]
Equivalently, letting $H_{n} = H \cap S_{n}$ and 
$H^{\prime}_{n} = H^{\prime} \cap S_{n}$, it is 
enough to show that
\addtocounter{equation}{1}
\begin{equation} \label{E:pet}
\begin{split}
\lim_{n \to \infty}& \frac{1}{|S_{n}|}
|\{\, g \in S_{n} \mid g^{-1}F_{0}g \subseteq H_{n}
\text{ and } g^{-1}F_{1}g \cap H_{n} = \emptyset \,\}| \\
&= \lim_{n \to \infty} \frac{1}{|S_{n}|}
|\{\, g \in S_{n} \mid g^{-1}F_{0}g \subseteq H^{\prime}_{n}
\text{ and } g^{-1}F_{1}g \cap H^{\prime}_{n} = \emptyset \,\}|. 
\end{split}
\end{equation}
To see this, first note that after changing our choice
of $\xi_{H^{\prime}}$ if necessary, we can suppose that
$A_{H} = A_{H^{\prime}}$. Next fix some $\varepsilon > 0$
and choose an integer $k \in I$ such that
$1 - \sum_{i=0}^{k} \alpha_{i} \ll \varepsilon$. 
Let $F_{0} \sqcup F_{1} \subseteq \Sym(d)$ and let
$n \gg d$ be such that 
$|\, |B_{i}^{\xi_{H}} \cap n|/n - \alpha_{i}|
\ll \varepsilon$ and 
$|\, |B_{i}^{\xi_{H^{\prime}}} \cap n|/n - \alpha_{i}|
\ll \varepsilon$ for all $0 \leq i \leq k$. Then
it is easily checked that if $n$ is sufficiently 
large, then
\[
\begin{split}
\biggl|\,\frac{1}{|S_{n}|}&
|\{\, g \in S_{n} \mid g^{-1}F_{0}g \subseteq H_{n}
\text{ and } g^{-1}F_{1}g \cap H_{n} = \emptyset \,\}| \\
&- \frac{1}{|S_{n}|}
|\{\, g \in S_{n} \mid g^{-1}F_{0}g \subseteq H^{\prime}_{n}
\text{ and } g^{-1}F_{1}g \cap H^{\prime}_{n} = \emptyset \,\}|\,\biggr| < \varepsilon;
\end{split} 
\]
and so (\ref{E:pet}) holds. This completes the proof of
Theorem \ref{T:irs}.

\end{document}